\numberwithin{equation}{section}
\newtheorem{theorem}{Theorem}[section]
\newtheorem{lemma}[theorem]{Lemma}
\newtheorem{proposition}[theorem]{Proposition}
\theoremstyle{definition}
\newtheorem{remark}[theorem]{{\bf Remark}}
\newtheorem{definition}[theorem]{Definition}
\newcommand{\unx}{\underline{x}}
\newcommand{\ku}{k_1}
\newcommand{\kd}{k_2}
\newcommand{\qcsa}[1]{\mathcal Q^{#1}_{c,s}(x)}
\crefname{enumi}{}{}
\crefname{enumii}{}{}
\title[]{On the application of the factorized Fueter-Sce map to the slice hyperholomorphic Cauchy kernel}
\author[A. De Martino]{Antonino De Martino}
\address{(ADM)
	Politecnico di Milano\\Dipartimento di Matematica\\Via E. Bonardi, 9\\20133
	Milano, Italy
} \email{antonino.demartino@polimi.it}
\author[S. Pinton]{Stefano Pinton}
\address{(SP)
	Politecnico di Milano\\Dipartimento di Matematica\\Via E. Bonardi, 9\\20133
	Milano, Italy
} \email{stefano.pinton@polimi.it}
\date{}
\begin{document}
	
\maketitle

\begin{abstract}
The Fueter-Sce theorem is one of the most important results in hypercomplex analysis, providing a two-step procedure for constructing axially monogenic functions starting from holomorphic functions of one variable. In the first step, the so-called slice operator is applied to holomorphic functions of one variable, producing the class of slice hyperholomorphic functions. The second step yields the class of axially monogenic functions by applying the pointwise differential operator $\Delta_{n+1}^{\frac{n-1}{2}}$, with $n$ odd, known as the Fueter-Sce map. The significance of the Fueter–Sce theorem also lies in the fact that it induces two spectral theories corresponding to the function classes it generates.

Over the years, factorizations of the Fueter-Sce map have been studied to identify the intermediate spaces that arise between slice hyperholomorphic functions and axially monogenic functions. Until now, only certain factorizations of the Fueter–Sce map have been considered. In this paper, our goal is to determine the most general factorizations of the Fueter–Sce map and to apply these differential operators to the Cauchy kernel of slice hyperholomorphic functions.
\end{abstract}

\noindent \textbf{AMS Classification:} \\

\noindent \textbf{Keywords:} Fueter-Sce mapping theorem, Fine structures on the $S$-spectrum, Slice hyperholomorphic functions

\section{Introduction}

In the literature, the notion of holomorphic functions of a one complex variable can be generalized to higher dimensions in two distinct ways. One method relies on studying systems of Cauchy–Riemann equations for functions of several complex variables, which gives rise to the theory of several complex variables.

An alternative approach is to examine functions taking values in quaternions or in Clifford algebras. Within this framework, two central theories have emerged: slice hyperholomorphic function theory (\cite{CSS2016}) and monogenic function theory (\cite{red, green}).These two theories of hypercomplex analysis differ significantly. For instance, polynomials are slice hyperholomorphic, but they are not monogenic. Despite this fundamental distinction, a connection between slice hyperholomorphic function theory and monogenic function theory is established through the Fueter–Sce extension theorem, see \cite{ColSabStrupSce, Fueter, Sce}.

This construction consists of two steps. In the first step, the space of holomorphic functions of one complex variable, denoted by $\mathcal{O}(\Pi)$, is extended via the slice operator $T_{FS1}$ to the space of slice hyperholomorphic functions $\mathcal{SH}(U_\Pi)$, where $\Pi \subset \mathbb{C}$ is an open symmetric set and $U_\Pi \subset \mathbb{R}^{n+1}$ is the corresponding induced domain. 

In the second step, the Fueter-Sce operator $\Delta_{n+1}^{\frac{n-1}{2}}$, defined for odd $n$, is applied to $\mathcal{SH}(U_\Pi)$, yielding the space of axially monogenic functions $\mathcal{AM}(U_\Pi)$. The entire procedure can be summarized as
\[
\begin{CD}
	\mathcal{O}(\Pi) @>T_{FS1}>> \mathcal{SH}(U_\Pi) @>\Delta_{n+1}^{\frac{n-1}{2}}>> \mathcal{AM}(U_\Pi).
\end{CD}
\]

The Fueter-Sce theorem has the important consequence of generating two spectral theories, respectively in the settings of slice hyperholomorphic and axially monogenic functions.

Over the past years, the authors have investigated factorizations of the Fueter-Sce map in terms of Dirac operators. The study began in \cite{CDPS, Polyf1, Polyf2} with the quaternionic case ($n=3$). In \cite{Fivedim}, the factorizations of $\Delta_{n+1}^{\frac{n-1}{2}}$ were analyzed for $n=5$. More recently, in \cite{CDP25}, the focus was placed on the main factorizations of the Fueter-Sce map. The various families of functions, together with the corresponding functional calculi induced by these factorizations, give rise to the notion of \emph{fine structure}; see \cite{S1, S2A} for surveys.

The goal of this paper is to determine the most general factorizations of the operator $\Delta_{n+1}^{\frac{n-1}{2}}$ in terms of the Dirac operator and its conjugate, and to apply the resulting operators from these factorizations to the slice hyperholomorphic Cauchy kernels.
\newline
\newline
\emph{Outline of the paper:} Section 1 is this introduction. In Section 2, we review the main notions of slice hyperholomorphic analysis as well as the monogenic case. Section 3 begins with a summary of the factorizations of the Fueter-Sce map studied in previous works and their applications to the slice hyperholomorphic Cauchy kernel. In the second part of this section, we apply the most general factorizations of the Fueter--Sce map to the slice hyperholomorphic Cauchy kernel, and we observe that, for particular choices of the parameters, these applications yield kernels of integral transforms previously obtained in other works. Finally, Section 4 provides some concluding remarks.

\section{Preliminaries}

In this paper, the framework is the real Clifford algebra $\mathbb{R}_n$, generated by $n$ imaginary units $e_1, \ldots, e_n$ satisfying the relations $e_i e_j + e_j e_i = -2 \delta_{ij}$.  
An element of the Clifford algebra $\mathbb{R}_n$ has the form $\sum_{A} e_A x_A$, where $A = (i_1, \ldots, i_r) \subseteq \{1,2,\ldots,n\}$ with $i_1 < \cdots < i_r$ is a multi-index, $e_A = e_{i_1} e_{i_2} \cdots e_{i_r}$, and $e_{\emptyset} = 1$.  

If $|A| = i_1 + \cdots + i_r$, then elements of the form $\sum_{A \,:\, |A|=k} e_A x_A$, with $k>0$, are called $k$-vectors.  
Note that $\mathbb{R}_1$ coincides with the algebra of complex numbers $\mathbb{C}$, while $\mathbb{R}_2$ corresponds to the real quaternion algebra, denoted by $\mathbb{H}$. For $n>1$, the Clifford algebras $\mathbb{R}_n$ are noncommutative, and for $n>2$ they also contain zero divisors (see \cite{GHS}).
\\An element $(x_1, \ldots, x_n) \in \mathbb{R}^n$ can be identified with a $1$-vector in the Clifford algebra via the map
\[
(x_1, \ldots, x_n) \longmapsto \underline{x} = x_1 e_1 + \cdots + x_n e_n,
\]
while an element $(x_0, x_1, \ldots, x_n) \in \mathbb{R}^{n+1}$ can be identified with a paravector of the form
\[
x = x_0 + \underline{x} = x_0 + \sum_{j=1}^n x_j e_j.
\]
The real part $x_0$ of $x$ will be also denoted by $\hbox{Re}(x)$.  The norm of $x \in \mathbb{R}^{n+1}$ is given by $|x|^2=x_0^2+....x_n^2$. The symbol $\mathbb{S}$ denotes the $(n-1)$-dimensional sphere of unit $1$-vectors in $\mathbb{R}^n$, i.e.
$$
\mathbb{S} = \left\{ \underline{x} = x_1 e_1 + \cdots + x_n e_n \, : \, x_1^2 + \cdots + x_n^2 = 1 \right\}.
$$
We observe that any element $I \in \mathbb{S}$ satisfies $I^2 = -1$. Thus, one can consider the real vector space
\[
\mathbb{C}_I = \mathbb{R} + I\mathbb{R} = \{ u + Iv \, : \, u,v \in \mathbb{R} \}.
\]
For each $I \in \mathbb{S}$, the set $\mathbb{C}_I$ is a $2$-dimensional real subspace of $\mathbb{R}^{n+1}$, isomorphic to the complex plane. Given an element $x = x_0 + \underline{x} \in \mathbb{R}^{n+1}$, we define $ J_x = \frac{\underline{x}}{|\underline{x}|}$, if  $\underline{x} \neq 0$. Then, for such an element $x$, we introduce the set
$$
[x] := \{ y \in \mathbb{R}^{n+1} \, : \, y = x_0 + J |\underline{x}|, \; J \in \mathbb{S} \},
$$
which is an $(n-1)$-dimensional sphere in $\mathbb{R}^{n+1}$.

\begin{definition}
Let $U \subseteq\mathbb{R}^{n+1}$.We say that $U$ is axially symmetric if $[x] \in U$ for every $x \in U$.
\end{definition}

\begin{definition}[Slice Cauchy domain]
An axially symmetric open set $U \subset \mathbb{R}^{n+1}$ is called a \emph{slice Cauchy domain} if $U \cap \mathbb{C}_I$ is a Cauchy domain in $\mathbb{C}_I$ for every $I \in \mathbb{S}$. More precisely, $U$ is a slice Cauchy domain if, for each $I \in \mathbb{S}$, the boundary $\partial(U \cap \mathbb{C}_I)$ of $U \cap \mathbb{C}_I$ is the union of a finite number of pairwise disjoint, piecewise continuously differentiable Jordan curves in $\mathbb{C}_I$.
\end{definition}

\begin{definition}[Slice hyperholomoprhic functions]
Let $U \subseteq \mathbb{R}^{n+1}$ be an axially symmetric set and let
$$ \mathcal{U}:= \{(u,v) \in \mathbb{R}^2 \, :\, u+\mathbb{S}v \subset U\}.$$
A function $f: U \to \mathbb{R}^{n+1}$ is called a left (resp. right) slice function, if it is of the form
\begin{equation}
\label{slice}
f(x)=\alpha(u,v)+I \beta(u,v) \quad \left( \, \hbox{resp.} \, \,f(x)=\alpha(u,v)+ \beta(u,v)I\right), \quad \hbox{for} \quad x=u+Iv \in U,
\end{equation}
where the functions $\alpha$, $\beta : \mathcal{U}\to \mathbb{R}_n$ satisfy the so-called even-odd conditions:
\begin{equation}
\label{eo}
\alpha(u,v)=\alpha(u,-v), \qquad \beta(u,v)=-\beta(u,-v), \qquad \forall (u,v) \in \mathcal{U}.
\end{equation}
If in addition $\alpha$ and $\beta$ satify the Cauchy-Riemann equations
$$ \partial_u \alpha(u,v)-\partial_v\beta(u,v)=0, \qquad \partial_v \alpha(u,v)+\partial_u \beta(u,v)=0,$$
then $f$ is called left (resp. right) slice hyperholomorphic.
\end{definition}

\begin{definition}
	Let $U \subset \mathbb{R}^{n+1}$ be an axially symmetric open set.  
	\begin{itemize}
		\item We denote by $\mathcal{SH}_L(U)$ (resp. $\mathcal{SH}_R(U)$) the set of left (resp. right) slice hyperholomorphic functions on $U$. When no confusion arises, we simply write $\mathcal{SH}(U)$.
		\item A slice hyperholomorphic function of the form \eqref{slice} such that $\alpha$ and $\beta$ are real-valued functions is called an intrinsic slice hyperholomorphic function, and the set of such functions will be denoted by $\mathcal{N}(U)$.
	\end{itemize}
\end{definition}

\begin{remark}
The set of intrinsic slice hyperholomorphic functions coincides with the set of left and right slice hyperholomorphic functions. Therefore, when referring to $\mathcal{N}(U)$, we do not distinguish between the left and right cases.
\end{remark}

A well-known example of a slice hyperholomorphic function is the slice hyperholomorphic Cauchy kernel, which plays a fundamental role in this paper, see \cite{CGK, ColomboSabadiniStruppa2011}.

\begin{proposition}[Cauchy kernel series]
Let $s$, $x \in \mathbb{R}^{n+1}$ such that $|x|<|s|$ then
$$ \sum_{n=0}^{\infty} x^n s^{-1-n}=-(x^2-2xs_0+|s|^2)^{-1}(x-\bar{s})$$
and
$$ \sum_{n=0}^{\infty}s^{-1-n} x^n =-(x-\bar{s})(x^2-2xs_0+|s|^2)^{-1}.$$
\end{proposition}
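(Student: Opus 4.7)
The plan is to verify the closed form by multiplying the series on the left (respectively, right) by the auxiliary polynomial $P(x) := x^2 - 2xs_0 + |s|^2$, whose coefficients $1$, $-2s_0$, $|s|^2$ are all real and therefore commute with every element of $\mathbb{R}_n$. The structural key is that $s$ itself satisfies the same polynomial, $s^2 - 2s_0 s + |s|^2 = 0$, as follows from $s + \bar s = 2s_0 \in \mathbb{R}$ and $s\bar s = \bar s s = |s|^2 \in \mathbb{R}$. The expected pay-off is that $P(x) A$ will telescope down to just the $m=0$ and $m=1$ terms of an expansion in powers of $x$.

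\textbf{Convergence.} First I would check that, under the hypothesis $|x|<|s|$, both $A := \sum_{n\ge 0} x^n s^{-1-n}$ and its reverse $A' := \sum_{n\ge 0} s^{-1-n} x^n$ converge absolutely in the Clifford norm: using submultiplicativity $|ab|\le c_n |a||b|$ on $\mathbb{R}_n$ together with $|s^{-1}| = |s|^{-1}$, the $n$-th term is dominated by a convergent geometric series of ratio $|x|/|s|<1$. This legitimates all the term-by-term manipulations below.

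\textbf{Main calculation.} Because $s_0$ and $|s|^2$ are real scalars, they pass freely across every $s^{-m}$ and every $x^k$, so
\[
P(x) A \;=\; \sum_{n\ge 0} x^{n+2} s^{-1-n} \;-\; 2s_0 \sum_{n\ge 0} x^{n+1} s^{-1-n} \;+\; |s|^2 \sum_{n\ge 0} x^n s^{-1-n}.
\]
Reindexing and collecting by powers of $x$, the coefficient of $x^m$ for $m\ge 2$ equals
\[
s^{1-m} - 2s_0 s^{-m} + |s|^2 s^{-1-m} \;=\; \bigl(s^2 - 2s_0 s + |s|^2\bigr)\, s^{-1-m} \;=\; 0,
\]
where I use that powers of $s$ commute. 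Only the cases $m=0$ and $m=1$ survive: the former contributes $|s|^2 s^{-1} = \bar s$ (using $|s|^2 = s\bar s$ and the commutativity of $s$ with $s^{-1}$), while the latter contributes $x\bigl(|s|^2 s^{-2} - 2s_0 s^{-1}\bigr) = x\,s^{-2}(|s|^2 - 2s_0 s) = x\,s^{-2}(-s^2) = -x$. Hence $P(x) A = \bar s - x$.

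\textbf{Invertibility and the reversed series.} A short paravector argument, splitting $P(x)=0$ into its real and vector parts, shows that any zero of $P$ must satisfy $|x|=|s|$; so under the hypothesis $|x|<|s|$ the factor $P(x)$ is invertible and one obtains $A = -P(x)^{-1}(x-\bar s)$, which is the first identity. The second identity is proved by the mirror-image calculation $A' P(x) = \bar s - x$: the same cancellation occurs, this time with the coefficients $s^{1-m}, -2s_0 s^{-m}, |s|^2 s^{-1-m}$ sitting to the \emph{left} of $x^m$ and producing $(s^2 - 2s_0 s + |s|^2) s^{-1-m}=0$ for $m\ge 2$. The only conceptual obstacle throughout is the noncommutativity of $x$ and $s$, and it is sidestepped by never moving $x$ past $s$ and instead letting the real minimal polynomial of $s$ do all the work.
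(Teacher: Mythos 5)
Your argument is correct. The paper itself gives no proof of this proposition --- it is imported from the standard references on slice hyperholomorphic functions, where the usual route is to note that the partial sums of $A=\sum_{n\ge0}x^ns^{-1-n}$ telescope to the Sylvester-type identity $xA-As=-1$, and then to verify that $-(x^2-2xs_0+|s|^2)^{-1}(x-\bar s)$ is the unique solution of that equation. Your route --- multiplying the series by the real-coefficient polynomial $P(x)=x^2-2s_0x+|s|^2$ and letting the minimal polynomial $s^2-2s_0s+|s|^2=0$ kill every coefficient of $x^m$ with $m\ge 2$ --- is an equally valid direct verification that sidesteps the uniqueness discussion; your evaluations of the surviving $m=0$ and $m=1$ terms ($|s|^2s^{-1}=\bar s$ and $|s|^2s^{-2}-2s_0s^{-1}=-1$) and the mirror computation for $A'$ are all correct. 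One step you should make explicit rather than wave at: in $\mathbb{R}_n$ with $n>2$ there are zero divisors, so ``$P(x)\neq 0$'' does not by itself yield invertibility. What actually saves you is that $P(x)=a+b\,\underline{x}$ with $a=x_0^2-|\underline{x}|^2-2s_0x_0+|s|^2$ and $b=2(x_0-s_0)$ real, so $P(x)$ lies in the commutative subalgebra $\mathbb{R}+\mathbb{R}\,\underline{x}$ (isomorphic to $\mathbb{C}$ when $\underline{x}\neq0$), where one has the explicit inverse $P(x)^{-1}=(a-b\,\underline{x})\,(a^2+b^2|\underline{x}|^2)^{-1}$; and $a^2+b^2|\underline{x}|^2=0$ forces $x\in[s]$, hence $|x|=|s|$, which your hypothesis excludes. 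With that sentence added, the proof is complete.
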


The sum of the Cauchy kernel series can be written in two equivalent ways.

\begin{proposition}
Suppose that $x$ and $s \in \mathbb{R}^{n+1}$ are such that $x \notin [s]$. We have that
$$-(x^2-2xs_0+|s|^2)^{-1}(x-\bar{s})=(s-\bar{x})(s^2-2x_0s+|x|^2)^{-1}$$
and
$$(s^2-2x_0s+|x|^2)^{-1}(s-\bar{x})=-(x-\bar{s})(x^2-2xs_0+|s|^2)^{-1}.$$
\end{proposition}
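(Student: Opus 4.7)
The first step is to convert each identity into an equivalent polynomial identity (free of inverses) by multiplying through by the quadratic factors on the appropriate sides. Writing $A = x^2 - 2xs_0 + |s|^2$ and $B = s^2 - 2x_0 s + |x|^2$, which are both invertible because $x \notin [s]$, the first identity is equivalent to
\[
A\,(s-\bar{x}) + (x-\bar{s})\,B = 0,
\]
while the second identity is equivalent to
\[
(s-\bar{x})\,A + B\,(x-\bar{s}) = 0.
\]

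I would then expand both sums, keeping careful track of the order of multiplication. The scalar quantities $x_0, s_0, |x|^2, |s|^2$ commute with every element of $\mathbb{R}_n$, and one uses $x\bar{x} = \bar{x}x = |x|^2$ (and analogously for $s$) to simplify. Cubic terms such as $x^2 s$ and $xs^2$ arise from $A\,(s-\bar{x}) + (x-\bar{s})\,B$, and the key reduction
\[
x^2 = 2x_0 x - |x|^2, \qquad s^2 = 2s_0 s - |s|^2,
\]
which follows from $(x-x_0)^2 = \underline{x}^2 = -|\underline{x}|^2$, turns them into
\[
x^2 s + x s^2 = 2(x_0 + s_0)\,xs - |x|^2 s - |s|^2 x.
\]
This exactly cancels the cross-terms $-2(s_0+x_0)\,xs$ coming from the middle entries of $A$ and $B$, leaving only scalar-times-paravector remnants that combine, via $x+\bar{x}=2x_0$ and $s+\bar{s}=2s_0$, into zero. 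The second identity is handled identically, with $xs$ replaced by $sx$ throughout.

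The main obstacle, and the reason this requires a computation rather than a one-line factorization, is noncommutativity: in general $xs \neq sx$ and $x\bar{s} \neq \bar{s}x$, so $A$ does not factor as $(x-s)(x-\bar{s})$ or $(x-\bar{s})(x-s)$; each such putative factorization produces a commutator correction of the form $x\bar{s}-\bar{s}x$. The identity $x^2 = 2x_0 x - |x|^2$ is precisely what lets one sidestep the lack of factorization, since it collapses each cubic expansion into a single noncommuting product $xs$ (or $sx$) plus scalar-paravector pieces, which then match up by the symmetry between the roles of $x$ and $s$. The hypothesis $x \notin [s]$ plays no role in the polynomial identity itself and is only used to make the inverses $A^{-1}$ and $B^{-1}$ meaningful.
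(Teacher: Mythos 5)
Your proposal is correct. The paper states this proposition without proof (it is quoted from the cited literature on slice hyperholomorphic Cauchy kernels), and your argument is the standard direct verification used there: clearing the inverses (which exist precisely because $x \notin [s]$ forces the paravectors $A$ and $B$ to be nonzero) reduces each identity to the polynomial identity $A(s-\bar{x}) + (x-\bar{s})B = 0$, respectively $(s-\bar{x})A + B(x-\bar{s}) = 0$, and the reduction $x^2 = 2x_0x - |x|^2$, $s^2 = 2s_0 s - |s|^2$ together with $x\bar{x}=|x|^2$, $x+\bar{x}=2x_0$ makes all terms cancel exactly as you describe. I checked the expansion: the cubic terms collapse to $2(x_0+s_0)xs - |x|^2 s - |s|^2 x$, the $xs$ pieces cancel against the middle terms of $A$ and $B$, and the remaining scalar-times-paravector terms vanish via $s+\bar{s}=2s_0$ and $x+\bar{x}=2x_0$; the second identity follows by the same computation with the order of all products reversed. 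No gap.
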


 These fact justify the following definition

\begin{definition}
Let $x$,$s \in \mathbb{R}^{n+1}$ such that $x \notin [x]$. 
\begin{itemize}
\item We say that $S^{-1}_L(s,x)$ (resp. $S^{-1}_R(s,x)$) is written in the form $I$ if
$$
S^{-1}_L(s,x)=-(x^2-2xs_0+|s|^2)^{-1}(x-\bar{s}), \qquad \left(\hbox{resp.} \quad S^{-1}_R(s,x)=-(x-\bar{s})(x^2-2xs_0+|s|^2)^{-1} \right)
$$
\item We say that $S^{-1}_L(s,x)$ (resp. $S^{-1}_R(s,x)$) is written in the form $II$ if
$$
	S^{-1}_L(s,x)=(s-\bar{x})(s^{2}-2x_0s+|x|^2)^{-1}, \qquad \left(\hbox{resp.} \quad S^{-1}_R(s,x)=(s^{2}-2x_0s+|x|^2)^{-1}(s-\bar{x}) \right).
$$
\end{itemize}
\end{definition}

\begin{remark}
In this paper, the following polynomial plays a crucial role:  
$$
\mathcal{Q}_{c,s}(x) = s^2 - 2x_0 s + |x|^2, \qquad s,x \in \mathbb{R}^{n+1},
$$
together with its inverse, defined for $s, x \in \mathbb{R}^{n+1}$ with $x \notin [s]$,  
\begin{equation}
\label{inveQ}
	\mathcal{Q}_{c,s}^{-1}(x) = \big(s^2 - 2x_0 s + |x|^2\big)^{-1}.
\end{equation}
The function $\mathcal{Q}_{c,s}^{-1}(x)$, occurring in the second representation of the Cauchy kernel, is commonly referred to as the commutative pseudo-Cauchy kernel.
 \end{remark}

\begin{theorem}[The Cauchy formulas for slice hyperholomorphic functions, see \cite{CGK,ColomboSabadiniStruppa2011}]
\label{Cauinte}
Let $U \subset \mathbb{R}^{n+1}$ be a bounded slice Cauchy domain let $I \in \mathbb{S}$ and set $ds_I=ds(-I)$. If $f$ is a left (resp.) slice hyperholomorphic function on a set that contains $\overline{U}$ then for any $x \in U$ we have
$$ f(x)=\frac{1}{2 \pi} \int_{\partial(U \cap \mathbb{C}_I)} S^{-1}_L(s,x) ds_I f(s), \qquad \left( \hbox{resp.} \, \, f(x)=\frac{1}{2 \pi} \int_{\partial(U \cap \mathbb{C}_I)} f(s) ds_I S^{-1}_R(s,x)\right).$$
\end{theorem}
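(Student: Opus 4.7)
My plan is to first verify the formula when $x$ lies on the integration slice $\mathbb{C}_I$, where it reduces to the classical complex Cauchy formula, and then extend to arbitrary $x \in U$ by exploiting the slice structure of both sides.

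Step one: fix $I \in \mathbb{S}$ and take $x = u + Iv \in U \cap \mathbb{C}_I$. For $s \in \mathbb{C}_I$ the elements $s$, $x$, and $\bar{x}$ all lie in the commutative slice $\mathbb{C}_I$, so
$$s^2 - 2x_0 s + |x|^2 = (s-x)(s-\bar{x}),$$
and the Form II representation of the kernel collapses, for $s \notin [x]$, to $S^{-1}_L(s,x) = (s-x)^{-1}$. The restriction $f|_{U\cap\mathbb{C}_I}$ is, by the slice ansatz together with the Cauchy--Riemann conditions imposed on $\alpha,\beta$, holomorphic in the classical sense as an $\mathbb{R}_n$-valued function of the complex variable on $\mathbb{C}_I$. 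The standard vector-valued Cauchy formula in $\mathbb{C}_I$ then yields
$$f(x) = \frac{1}{2\pi I}\int_{\partial(U\cap\mathbb{C}_I)} (s-x)^{-1}\,ds\,f(s) = \frac{1}{2\pi}\int_{\partial(U\cap\mathbb{C}_I)} S^{-1}_L(s,x)\,ds_I\,f(s),$$
after moving the scalar factor $1/I = -I$ through $ds$ (which commutes with $I$ on $\mathbb{C}_I$) and collecting it into $ds_I = ds(-I)$.

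Step two: for arbitrary $x \in U$, define
$$F(x) := \frac{1}{2\pi}\int_{\partial(U\cap\mathbb{C}_I)} S^{-1}_L(s,x)\,ds_I\,f(s).$$
Since $S^{-1}_L(s,x)$ is left slice hyperholomorphic in $x$ on $\mathbb{R}^{n+1}\setminus[s]$ and the contour is compact and disjoint from every sphere $[x]\subset U$, a Riemann-sum approximation---each summand being a left slice hyperholomorphic function of $x$ multiplied on the right by the Clifford constant $ds_I f(s)$, an operation that preserves left slice hyperholomorphy---shows that $F \in \mathcal{SH}_L(U)$. By Step one, $F$ and $f$ agree on $U \cap \mathbb{C}_I$. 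The axial symmetry of $U$ ensures that every sphere $[x]\subset U$ meets $\mathbb{C}_I$ in the two points $u\pm Iv$, and the representation (structure) formula for left slice hyperholomorphic functions determines any such function on $[x]$ from its values at these two points. Hence $F \equiv f$ on $U$.

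The main delicate point is justifying that $F$ is genuinely left slice hyperholomorphic in $x$: one must check that Form II of $S^{-1}_L(s,x)$ is itself a left slice function of $x$---transparent after writing it as $\alpha(x_0,|\underline{x}|) + J_x\,\beta(x_0,|\underline{x}|)$ with $\alpha,\beta$ taking values in $\mathbb{R}_n$ and depending on $s$---and that this slice structure, together with the Cauchy--Riemann conditions in $(x_0,|\underline{x}|)$, survives integration against the density $ds_I f(s)$. The right slice hyperholomorphic statement is proved by the identical two-step scheme, using the Form II expression of $S^{-1}_R(s,x)$ and swapping the sides of multiplication throughout.
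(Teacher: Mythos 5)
The paper does not prove this theorem; it is quoted as a known preliminary with a citation to \cite{CGK,ColomboSabadiniStruppa2011}, so there is no internal proof to compare against. Your two-step argument (reduce to the slice $\mathbb{C}_I$, then propagate by the representation formula) is exactly the standard proof given in those references, and it is essentially correct. The one point you should state more carefully is Step one: a left slice hyperholomorphic $f$ restricted to $U\cap\mathbb{C}_I$ satisfies $(\partial_u+I\partial_v)f=0$ with $I$ acting by \emph{left} multiplication, and since $I$ does not commute with the Clifford-valued components of $f$ in the basis $\{e_A\}$, "classical vector-valued holomorphy" is not immediate componentwise; one must invoke the splitting lemma, i.e.\ decompose $\mathbb{R}_n$ as a free left $\mathbb{C}_I$-module and apply the scalar Cauchy formula to the resulting $\mathbb{C}_I$-valued holomorphic components, keeping the kernel and $ds_I$ on the left so that all non-scalar factors remain on the right. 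With that made explicit, the remaining steps --- the factorization $\mathcal{Q}_{c,s}(x)=(s-x)(s-\bar{x})$ on the commutative slice, the left slice hyperholomorphy of $x\mapsto S_L^{-1}(s,x)$ and its stability under right multiplication by constants and under integration, and the identity principle via the representation formula --- are all sound.
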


In the context of hypercomplex analysis, we now introduce another relevant class of functions:

\begin{definition}
Let $U \subseteq \mathbb{R}^{n+1}$ be an axially symmetric domain. A function $f: U \to \mathbb{R}^{n+1}$ of class $\mathcal{C}^1$ is a left (resp right) axially monogenic function if it is in the kernel of the Dirac operator $D$:
$$ Df(x)= \left( \frac{\partial}{\partial x_0}+ \sum_{i=1}^{n}e_i \frac{\partial}{\partial x_i}\right)f(x)=0 \qquad \left( \hbox{resp.} \, \, f(x)D=  \frac{\partial}{\partial x_0}f(x)+ \sum_{i=1}^{n} \frac{\partial}{\partial x_i}f(x)e_i=0 \right),$$
and if in addition it has axial form, i.e. it is a left (resp. right) function
$$ f(x)=A(u,v)+\underline{\omega}B(u,v), \qquad \left( \, \, \hbox{resp.} \, \, f(x)=A(u,v)+B(u,v)\underline{\omega} \right), \quad \underline{\omega} \in \mathbb{S},$$
where $A$ and $B$ are functions with values in $\mathbb{R}^{n+1}$, that satisfy the even-odd condition \eqref{eo}. We denote this class of functions by $ \mathcal{AM}_L(U)$ (resp. $\mathcal{AM}_R(U)$). When no confusion arise we denote this class of functions by $\mathcal{AM}(U)$.
\end{definition}

\begin{remark}
The conjugate of the Dirac operator is given by
$$ \overline{D}=\frac{\partial}{\partial x_0}- \sum_{i=1}^{n} e_i \frac{\partial}{\partial x_i}.$$
\end{remark}

A connection between the axially monogenic and slice hyperhlomoprhic functions is given by the Fueter-Sce theorem, see \cite{ColSabStrupSce, Sce}.

\begin{theorem}[Fueter-Sce mapping theorem]
\label{FS}
Let $n$ be an odd number and set $h_n=\frac{n-1}{2}$. We assume that  
$$
f_0(z)= \alpha(u,v)+i \beta(u,v)
$$  
is a holomorphic function of one complex variable defined in a domain $\Pi$ contained in the upper half-plane. Consider  
$$
U_{\Pi}:= \{x=x_0+ \underline{x} \,: \, (x_0,| \underline{x}|) \in \Pi \},
$$ 
which is the open set induced by $\Pi$ in $\mathbb{R}^{n+1}$. The operator $T_{FS1}$, called the \emph{slice operator}, is defined by  
$$
f(x)=T_{FS1}(f_0):=\alpha(x_0, |\underline{x}|)+ \frac{\underline{x}}{| \underline{x}|} \beta(x_0, |\underline{x}|), \qquad x \in U_{\Pi},
$$
and maps holomorphic functions into slice hyperholomorphic functions. Moreover, the operator  
$$
T_{FS2}:= \Delta_{n+1}^{h_n}
$$  
maps slice hyperholomorphic functions into axially monogenic functions. In fact, the function  
$$
\breve{f}(x):= \Delta_{n+1}^{h_n} \left(\alpha(x_0, |\underline{x}|)+ \frac{\underline{x}}{| \underline{x}|} \beta(x_0, |\underline{x}|)\right), \qquad x \in U_\Pi,
$$
belongs to the kernel of the Dirac operator, i.e. $
D \breve{f}(x)=0.
$
\end{theorem}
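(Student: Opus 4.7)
The plan is to verify the two claims of the theorem separately. For the slice operator $T_{FS1}$, I first extend the pair $(\alpha,\beta)$ from $\Pi$ to the symmetric set $\Pi\cup\{(u,-v):(u,v)\in\Pi\}$ by setting $\alpha(u,-v):=\alpha(u,v)$ and $\beta(u,-v):=-\beta(u,v)$, so that the even-odd conditions \eqref{eo} hold by construction. For $x=x_0+\underline{x}\in U_\Pi$ with $\underline{x}\neq 0$, writing $I:=\underline{x}/|\underline{x}|\in\mathbb{S}$ gives $x=x_0+I|\underline{x}|$, and the even-odd conditions make $T_{FS1}(f_0)(x)=\alpha(x_0,|\underline{x}|)+I\beta(x_0,|\underline{x}|)$ insensitive to the sign ambiguity $(I,|\underline{x}|)\leftrightarrow(-I,-|\underline{x}|)$, so $T_{FS1}(f_0)$ is a left slice function in the sense of \eqref{slice}. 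The Cauchy-Riemann equations for $\alpha,\beta$, equivalent to $f_0\in\mathcal{O}(\Pi)$, then upgrade this slice function to a left slice hyperholomorphic one, and consequently $T_{FS1}(f_0)\in\mathcal{SH}_L(U_\Pi)$.

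For the second map $T_{FS2}=\Delta_{n+1}^{h_n}$, I set $r:=|\underline{x}|$ and $\omega:=\underline{x}/r$, and write the slice hyperholomorphic function as $f(x)=\alpha(x_0,r)+\omega\beta(x_0,r)$. The polar splitting
$$
\Delta_{n+1} \;=\; \partial_{x_0}^2 + \partial_r^2 + \tfrac{n-1}{r}\partial_r + \tfrac{1}{r^2}\Delta_{S^{n-1}},
$$
combined with the spherical-harmonic eigenvalue identity $\Delta_{S^{n-1}}\omega=-(n-1)\omega$, shows that $\Delta_{n+1}$ preserves the axial form, so $\Delta_{n+1}^{h_n}f = A_{h_n}+\omega B_{h_n}$ for scalar functions of $(x_0,r)$. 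A direct computation in the same coordinates also yields
$$
D(A+\omega B) \;=\; \bigl(\partial_{x_0}A-\partial_r B-\tfrac{n-1}{r}B\bigr)+\omega\bigl(\partial_{x_0}B+\partial_r A\bigr),
$$
so axial monogenicity of $\Delta_{n+1}^{h_n}f$ reduces to two scalar identities for $(A_{h_n},B_{h_n})$. The Cauchy-Riemann equations force $\partial_{x_0}^2\alpha+\partial_r^2\alpha=0=\partial_{x_0}^2\beta+\partial_r^2\beta$, so each application of $\Delta_{n+1}$ drops the derivative order in $(x_0,r)$ at the cost of introducing a factor $1/r^2$. An induction on $k$ for $\Delta_{n+1}^kf = A_k+\omega B_k$ yields explicit recursions for $(A_k,B_k)$, and at $k=h_n=(n-1)/2$ the dimension-specific factor $(n-1)-2h_n=0$ produces exactly the cancellations needed to force $D(\Delta_{n+1}^{h_n}f)=0$.

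The main obstacle is the combinatorial bookkeeping in this induction, which becomes awkward as $h_n$ grows. A cleaner alternative, which I would favour, exploits the slice Cauchy formula of Theorem \ref{Cauinte}: since $\Delta_{n+1}^{h_n}$ commutes with the slice contour integral and acts only in the $x$-variable, it suffices to verify
$$
D\bigl(\Delta_{n+1}^{h_n}\,S^{-1}_L(s,x)\bigr) \;=\; 0
$$
for every fixed $s\notin[x]$. Using the form-$II$ representation $S^{-1}_L(s,x)=(s-\bar x)\mathcal{Q}_{c,s}^{-1}(x)$ together with standard Leibniz identities for $\Delta_{n+1}$ acting on powers of $\mathcal{Q}_{c,s}^{-1}(x)$, the whole problem collapses to explicit algebraic manipulations of rational expressions in $(x,s)$ and in $n$, where the vanishing at $h_n=(n-1)/2$ is immediate from an elementary coefficient count.
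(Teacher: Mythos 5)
The paper does not actually prove Theorem \ref{FS}: it is quoted as a classical result with references to Fueter, Sce and the translation volume, so there is no in-paper argument to compare yours against. Judged on its own terms, your proposal is a viable strategy, and your preferred second route is the one most compatible with the machinery this paper does develop: once one knows $\Delta_{n+1}^{h_n}S^{-1}_L(s,x)=\gamma_n(s-\bar x)\mathcal{Q}_{c,s}^{-h_n-1}(x)$ (the result the paper quotes from \cite{CSS}, i.e.\ \eqref{appL} with $m=h_n$), identity \eqref{d_cauchy_ker} of Lemma \ref{l_one} with $m=h_n+1$ gives $D\bigl((s-\bar x)\mathcal{Q}_{c,s}^{-h_n-1}(x)\bigr)=-2\bigl(h_n-(h_n+1)+1\bigr)\mathcal{Q}_{c,s}^{-h_n-1}(x)=0$ at once, and there is no circularity since neither the slice Cauchy formula nor that kernel computation uses the Fueter--Sce theorem. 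Your first part, on $T_{FS1}$, is essentially definition-chasing and is fine.

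Three points need attention before route 2 is a complete proof. First, it does not avoid an induction: the identity $\Delta_{n+1}^{h_n}S^{-1}_L(s,x)=\gamma_n(s-\bar x)\mathcal{Q}_{c,s}^{-h_n-1}(x)$ is itself established by iterating Leibniz-type identities on powers of $\mathcal{Q}_{c,s}^{-1}(x)$, so you have traded one induction for another --- admittedly a more tractable one, on an explicit rational function rather than on a general slice function. Second, Theorem \ref{Cauinte} requires a bounded slice Cauchy domain whose closure lies in the domain of holomorphy; for a general $U_\Pi$ you must localize around each point and justify differentiating under the integral sign (routine, but it should be said). Third, $D\breve f=0$ alone does not give axial monogenicity as defined in the paper: you still need the axial form of $\Delta_{n+1}^{h_n}f$, which your route-1 observation (the degree-one spherical-harmonic eigenvalue identity) supplies and must therefore be retained even if you discard the rest of route 1. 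Finally, a caveat on route 1 as sketched: the mechanism ``each application of $\Delta_{n+1}$ drops the derivative order'' works only at the first step, because after one application the new pair $(A_1,B_1)$ is no longer harmonic in $(x_0,r)$ --- the Cauchy--Riemann system is not preserved by $\Delta_{n+1}$ --- so the genuine induction must track more complicated expressions in $\partial^j\alpha$, $\partial^j\beta$ and powers of $1/r$; this is exactly the bookkeeping Sce's original argument carries out, and your sketch understates it.
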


\begin{remark}
The above theorem was first established for quaternions in 1934, see \cite{Fueter}. In the late 1950s, it was extended to Clifford algebras in the case of odd dimensions, see \cite{Sce} (for an English translation, see \cite{ColSabStrupSce}). In 1997, the Fueter-Sce theorem was further generalized to arbitrary dimensions. In this generalization, the operator $\Delta_{n+1}^{h_n}$ is understood as a fractional operator defined via the Fourier transform, see \cite{TaoQian1, TAOBOOK}. Moreover, in \cite{DG, DDG, DDG1} one can find a connection between the Fueter--Sce theorem and the Cauchy--Kovalevskaya extension.

\end{remark}

\begin{remark}
In Theorem \ref{FS}, one of the hypotheses is that the holomorphic functions are defined only on the upper-half complex plane. However, this condition can be relaxed. This can be achieved by considering functions of the form
$$
f(x)= \alpha(u,v)+I \beta(u,v), \qquad \text{for } x=u+Iv \in U,
$$
where the functions $\alpha, \beta : \mathcal{U} \to \mathbb{R}^{n+1}$ satisfy the even-odd condition \eqref{eo}.

\end{remark}

The extension procedure (given by the Fueter--Sce mapping theorem) from holomorphic functions of cone complex variable to axially monogenic functions (passing through slice hyperholomorphic functions) can be illustrated as follows:

\begin{equation}
\label{SC}
	\begin{CD}
		\textcolor{black}{\mathcal{O}(\Pi)} @>T_{FS1}>> \textcolor{black}{\mathcal{SH}(U_\Pi)} @> T_{F2} = \Delta_{4} >> \textcolor{black}{\mathcal{AM}(U_\Pi)}.
	\end{CD}
\end{equation}

In \cite{CSS}, the authors applied the operator $T_{FS2}:= \Delta_{n+1}^{h_n}$ to the slice hyperholomorphic Cauchy kernel, which leads to the following result.

\begin{theorem}
Let $n$ be an odd number set $h_n= \frac{n-1}{2}$ and consider $x$, $s \in \mathbb{R}^{n+1}$ be such that $s \notin [x]$ and recall that $\mathcal{Q}_{c,s}^{-1}(x)$ is defined in \eqref{inveQ}. Then the application of the operator $T_{FS2}:= \Delta_{n+1}^{h_n}$ to the left (resp. right) slice hyperholomorphic Cauchy kernel in form $II$ we get
$$ \Delta_{n+1}^{h_n}S^{-1}_L(s,x)=F_L^n(s,x)=\gamma_n (s-\bar{x})\mathcal{Q}_{c,s}^{-h_n-1}(x),$$ 
$$ \left(\hbox{resp.} \, \, \Delta_{n+1}^{h_n}S^{-1}_R(s,x)=F_R^n(s,x)=\gamma_n \mathcal{Q}_{c,s}^{-h_n-1}(x) (s-\bar{x})\right),$$
where 
$ \gamma_n:= 4^{h_n} h_n! (-h_n)_{h_n}$.
\end{theorem}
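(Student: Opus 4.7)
The plan is to work with form $II$ of the left Cauchy kernel, $S_L^{-1}(s,x) = (s-\bar{x})\,\mathcal{Q}_{c,s}^{-1}(x)$, and to iterate a single differentiation identity $h_n$ times. The crucial algebraic observation is that, for fixed $s$, the polynomial
$\mathcal{Q}_{c,s}(x) = (s-x_0)^2 + |\underline{x}|^2$
takes values in the commutative subalgebra $\mathbb{R}[s] \subset \mathbb{R}_n$, so $\mathcal{Q}_{c,s}^{-k}(x)$ together with all of its $x$-partial derivatives commute among themselves. They do not, however, commute with $\underline{x}$ or with $s-\bar{x}$ in general, and tracking this non-commutativity in the Leibniz step is the only delicate point of the argument.

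First I would prove, by a direct chain-rule computation using $\partial_{x_0}\mathcal{Q}_{c,s} = -2(s-x_0)$, $\partial_{x_j}\mathcal{Q}_{c,s} = 2x_j$ for $j\ge 1$, $\Delta_{n+1}\mathcal{Q}_{c,s} = 2(n+1)$, and $\sum_{i=0}^n(\partial_{x_i}\mathcal{Q}_{c,s})^2 = 4\mathcal{Q}_{c,s}$, the scalar-type identity
\[
\Delta_{n+1}\mathcal{Q}_{c,s}^{-k}(x) = 2k(2k+1-n)\,\mathcal{Q}_{c,s}^{-k-1}(x).
\]
Then I would apply the Leibniz formula
$\Delta_{n+1}(uv) = (\Delta_{n+1}u)v + 2\sum_i(\partial_{x_i}u)(\partial_{x_i}v) + u(\Delta_{n+1}v)$
with $u = s-\bar{x}$ and $v = \mathcal{Q}_{c,s}^{-k}(x)$. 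Since $s-\bar{x}$ is affine in $x$ the first summand vanishes, and using $\partial_{x_0}(s-\bar{x}) = -1$, $\partial_{x_j}(s-\bar{x}) = e_j$ together with $\partial_{x_0}v = 2k(s-x_0)\mathcal{Q}_{c,s}^{-k-1}$, $\partial_{x_j}v = -2kx_j\mathcal{Q}_{c,s}^{-k-1}$, the cross term collapses to
\[
\sum_{i=0}^{n}(\partial_{x_i}u)(\partial_{x_i}v) = -2k\bigl[(s-x_0) + \underline{x}\bigr]\mathcal{Q}_{c,s}^{-k-1} = -2k(s-\bar{x})\mathcal{Q}_{c,s}^{-k-1}.
\]
The factor $(s-\bar{x})$ reconstructs itself \emph{on the left} of $\mathcal{Q}_{c,s}^{-k-1}$; this is the one place where non-commutativity could bite, and it works out cleanly because the real $x_j$ pass through the $e_j$ and $\sum_j x_j e_j = \underline{x}$. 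Combining the three Leibniz contributions gives the recursion
\[
\Delta_{n+1}\bigl[(s-\bar{x})\mathcal{Q}_{c,s}^{-k}(x)\bigr] = 2k(2k-n-1)(s-\bar{x})\mathcal{Q}_{c,s}^{-k-1}(x).
\]

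Finally, iterating this recursion $h_n$ times starting from $k=1$ produces the overall constant $\prod_{k=1}^{h_n} 2k(2k-n-1)$. Writing $2k-n-1 = 2(k-h_n-1)$ via $n = 2h_n+1$ and recognising $(-h_n)(-h_n+1)\cdots(-1) = (-h_n)_{h_n}$, this product simplifies precisely to $2^{h_n}h_n!\cdot 2^{h_n}(-h_n)_{h_n} = 4^{h_n}h_n!(-h_n)_{h_n} = \gamma_n$, which is the claimed left-hand identity. The right-hand identity for $S_R^{-1}(s,x)$ follows by the same computation performed with $(s-\bar{x})$ on the right of $\mathcal{Q}_{c,s}^{-k}$: the cross term analogously produces $-2k\mathcal{Q}_{c,s}^{-k-1}(s-\bar{x})$, matching the stated form of $F_R^n(s,x)$. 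The main obstacle throughout is the careful handling of the non-commutative cross term in Leibniz; once it is seen to collapse to a clean $(s-\bar{x})\mathcal{Q}_{c,s}^{-k-1}$, the rest of the argument is a scalar induction.
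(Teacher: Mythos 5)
Your proof is correct, and every identity you use checks out: $\mathcal{Q}_{c,s}(x)=(s-x_0)^2+|\underline{x}|^2$ indeed takes values in the commutative subalgebra $\mathbb{R}[s]$, the scalar identity $\Delta_{n+1}\mathcal{Q}_{c,s}^{-k}(x)=2k(2k+1-n)\mathcal{Q}_{c,s}^{-k-1}(x)$ is right, the noncommutative Leibniz cross term does collapse to $-2k(s-\bar{x})\mathcal{Q}_{c,s}^{-k-1}(x)$ with the factor correctly reappearing on the left (no commutation of $e_j$ past $\mathcal{Q}_{c,s}$ is ever needed there), and the product $\prod_{k=1}^{h_n}2k(2k-n-1)=4^{h_n}h_n!(-h_n)_{h_n}=\gamma_n$. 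Note, however, that the paper itself gives no proof of this statement: it is quoted from the reference [CSS], so there is no internal argument to compare against line by line. The closest machinery the paper does develop (Lemma \ref{l_one} and the inductions of Theorem \ref{p11}) factors everything through first-order Dirac identities such as $D\big((s-\bar{x})\mathcal{Q}_{c,s}^{-m}(x)\big)=2(m-h_n-1)\mathcal{Q}_{c,s}^{-m}(x)$, which forces the bookkeeping of mixed terms involving powers of $(s-x_0)$; your route via the second-order Leibniz rule for $\Delta_{n+1}$ is more economical for this particular theorem because the recursion $\Delta_{n+1}\big[(s-\bar{x})\mathcal{Q}_{c,s}^{-k}(x)\big]=2k(2k-n-1)(s-\bar{x})\mathcal{Q}_{c,s}^{-k-1}(x)$ preserves the shape of the kernel exactly, reducing the whole statement to a scalar product. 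As a sanity check, truncating your iteration after $m$ steps reproduces formula \eqref{appL} with $\gamma_m=4^m m!(-h_n)_m$, consistent with the paper (the displayed $n=5$ formula $\Delta_6 S^{-1}_L=8(s-\bar{x})\mathcal{Q}_{c,s}^{-2}(x)$ quoted from the five-dimensional paper carries the opposite sign to $\gamma_1=-8$; your computation agrees with \eqref{appL} and with the final value of $\gamma_n$, so the discrepancy is not yours).
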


By combination the above theorem and the Cauchy theorem for slice hyperholomorphic functions one can can get an integral version of th Fueter-Sce mapping theorem, see \cite{CSS}.

\begin{theorem}[Fueter-Sce mapping theorem in integral form]
\label{FSinte}
Let $n$ be an odd number and $h_n=\frac{n-1}{2}$. Let $U \subset \mathbb{R}^{n+1}$ be a bounded slice Cauchy domain, let $I \in \mathbb{S}$ and set $ds_I=- I ds$. If $f$ is a left (resp. right) slice hyperholomorphic function on a set that contains $\bar{U}$, then $\breve{f}(x)= \Delta_{n+1}^{h_n} f(x)$ is left (resp. right) axially monogenic and it admits, for any $x \in U$, the following integral representation 
$$ \breve{f}(x)=\frac{1}{2 \pi} \int_{\partial(U \cap \mathbb{C}_I)} F_L^n(s,x)ds_I f(s) \quad \left( \, \, \hbox{resp.} \, \, \breve{f}(x)=\frac{1}{2 \pi} \int_{\partial(U \cap \mathbb{C}_I)}f(s)ds_I F_R^n(s,x)\right).$$

\end{theorem}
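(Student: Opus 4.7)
The plan is to start from the Cauchy integral formula for slice hyperholomorphic functions (Theorem \ref{Cauinte}) and apply the Fueter--Sce differential operator $\Delta_{n+1}^{h_n}$ to both sides. Since $n$ is odd, $h_n=(n-1)/2$ is a nonnegative integer, so $\Delta_{n+1}^{h_n}$ is an honest iterated Laplacian in the paravector variable $x$, a local differential operator of finite order acting only in $x$ and not in the integration variable $s$.

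First, I would write, for $x \in U$,
$$
f(x)=\frac{1}{2\pi}\int_{\partial(U\cap \mathbb{C}_I)} S_L^{-1}(s,x)\,ds_I\,f(s),
$$
and apply $\Delta_{n+1}^{h_n}$ to both sides. The key analytic point is to move $\Delta_{n+1}^{h_n}$ inside the integral. For any fixed $x\in U$ there is a neighbourhood of $x$ disjoint from the compact contour $\partial(U\cap \mathbb{C}_I)$, so the integrand and all its partial derivatives up to order $2h_n$ in $x$ are continuous in $(s,x)$ on a neighbourhood of $\{x\}\times \partial(U\cap \mathbb{C}_I)$. A standard application of differentiation under the integral sign (e.g.\ via dominated convergence on difference quotients of each partial derivative) then justifies
$$
\breve{f}(x)=\Delta_{n+1}^{h_n}f(x)=\frac{1}{2\pi}\int_{\partial(U\cap \mathbb{C}_I)} \bigl(\Delta_{n+1}^{h_n}S_L^{-1}(s,x)\bigr)\,ds_I\,f(s).
$$
Then I invoke the preceding theorem, which computes $\Delta_{n+1}^{h_n}S_L^{-1}(s,x)=F_L^n(s,x)=\gamma_n(s-\bar x)\mathcal{Q}_{c,s}^{-h_n-1}(x)$, to obtain the stated integral representation. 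The right slice hyperholomorphic case is handled identically, using the right version of Theorem \ref{Cauinte} together with the right Fueter--Sce kernel $F_R^n(s,x)$.

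It remains to check that $\breve{f}$ is axially monogenic. This has in fact already been built into the setup: Theorem \ref{FS} says that $\Delta_{n+1}^{h_n}$ maps slice hyperholomorphic functions to axially monogenic ones, so $\breve{f}=\Delta_{n+1}^{h_n}f$ lies in $\mathcal{AM}(U)$ by construction. Alternatively, one can verify it directly from the integral representation: since $D$ acts only in $x$ and the same interchange argument applies, one gets $D\breve f(x)=\frac{1}{2\pi}\int_{\partial(U\cap\mathbb{C}_I)} D_x F_L^n(s,x)\,ds_I\,f(s)=0$, using that $F_L^n(\cdot,x)$ is monogenic in $x$ (this is precisely the content of the Fueter--Sce theorem applied to the Cauchy kernel, as in the previous theorem).

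The only genuinely delicate step is the justification of differentiating under the integral, and this reduces to a routine compactness argument because the contour is bounded, piecewise $C^1$, and disjoint from $x$, so the map $(s,x)\mapsto S_L^{-1}(s,x)$ and all its $x$-derivatives up to order $2h_n$ are uniformly bounded on $\partial(U\cap\mathbb{C}_I)$ times a small neighbourhood of any chosen $x\in U$. I do not anticipate any obstacle beyond bookkeeping; in particular the independence of the representation on the choice of $I\in\mathbb{S}$ is inherited directly from the corresponding independence in Theorem \ref{Cauinte}.
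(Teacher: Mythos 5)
Your proposal is correct and follows essentially the same route the paper indicates: the paper obtains this theorem by combining the Cauchy formula (Theorem \ref{Cauinte}) with the preceding computation $\Delta_{n+1}^{h_n}S_L^{-1}(s,x)=F_L^n(s,x)$, i.e.\ by differentiating under the integral sign exactly as you do. Your additional care about justifying the interchange of $\Delta_{n+1}^{h_n}$ with the contour integral is a welcome elaboration of a step the paper (following \cite{CSS}) leaves implicit.
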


\begin{remark}
For $\alpha$, $ \beta >0$ the symbol $(-\alpha)_{\beta}$ have the following interpretation:

$$(-\alpha)_\beta=(-1)^\beta (\alpha-\beta+1)_\beta=(-1)^\beta \frac{\Gamma(\alpha+1)}{\Gamma(\alpha-\beta+1)}.$$

\end{remark}

One of the most significant applications of the Fueter-Sce mapping theorem lies in its capacity to induce two distinct spectral theories corresponding to the two classes of hyperholomorphic functions it generates. By applying the Cauchy formula for slice hyperholomorphic functions (see Theorem \ref{Cauinte}), one obtains the $S$-functional calculus, which is constructed on the basis of the $S$-spectrum, see \cite{ColomboSabadiniStruppa2011}.
The spectral theory for quaternionic operators based on the $S$-spectrum, originally inspired by the work of Birkhoff and von Neumann on quaternionic quantum mechanics (see \cite{BF}), experienced a substantial breakthrough in 2006 with the introduction of the $S$-spectrum and the associated $S$-resolvent operators. These notions have played a crucial role in the development of the theory, enabling a wide range of applications, including fractional diffusion problems (see \cite{ColomboDenizPinton2020, ColomboDenizPinton2021, FJBOOK, ColomboPelosoPinton2019}), Schur analysis (see \cite{ACS2016}), and the study of characteristic operator functions with applications to linear systems theory (see \cite{AlpayColSab2020}).

 Conversely, the Cauchy formula for axially monogenic functions leads to the monogenic functional calculus, introduced and developed by McIntosh and his collaborators, see \cite{JBOOK, JM}, which relies on a different notion of spectrum: the monogenic spectrum.

An alternative way to define the monogenic functional calculus is through the integral version of the Fueter-Sce mapping theorem, see Theorem \ref{FSinte}. This leads to the so-called $F$-functional calculus, see \cite{FABJONA}. The construction is summarized in the following diagram:

 	\begin{equation}
	\label{starr}
	{\footnotesize
		\begin{CD}
			{\mathcal{SH}(U)} @.  {\mathcal{AM}(U)} \\   @V  VV
			@.
			\\
			{{\rm  Slice\ Cauchy \ Formula}}  @> T_{FS2}=\Delta_{n+1}^{h_n}>> {{\rm Fueter-Sce\ theorem \ in \ integral\ form}}
			\\
			@V VV    @V VV
			\\
			{S-{\rm  functional \ calculus}} @. F-{{\rm functional \ calculus}}
		\end{CD}
	}
\end{equation}

\begin{remark}
In the above diagram, we have omitted the notation $\Omega_\Pi$, since our focus is solely on the mapping $T_{FS2}=\Delta_{n+1}^{h_n}$ from $\mathcal{SH}(U)$ to $\mathcal{AM}(U)$. Hence, in what follows, we will consider axially symmetric open sets, which will be denoted by $U$.
\end{remark}

\section{Factorizations of the Fueter-Sce map}

Following the Riesz--Dunford functional calculus, see \cite{RD}, where the resolvent equation yields a product rule, one is naturally led to search for an analogous resolvent equation in the setting of the $F$-functional calculus. This problem was investigated in \cite{FABJONA} and, more generally, in \cite{CDS1, CDS}. We remark that the situation for the $F$-functional calculus is more delicate, since it is defined by means of an integral transform. The following notion plays a key role in our study, as it is essential in establishing the product rule for the $F$-functional calculus, see \cite{CDP25}.  

\begin{definition}
	We refer to the fine structures on the $S$-spectrum as the collection of functions together with the corresponding functional calculi induced by the factorization of the second Fueter--Sce mapping.
\end{definition}

The study of the factorization of the Fueter--Sce mapping began in the quaternionic setting. In this case, the first factorization considered is 
$
\Delta_4 = \overline{D}D.
$
This leads to the following refinement of the scheme \eqref{SC}:

\begin{equation}
	\label{fine1} \mathcal{SH}_L(U) \overset{D}{\longrightarrow} \mathcal{AH}_L(U) \overset{\overline{D}}{\longrightarrow} \mathcal{AM}_L(U),
\end{equation}  

where  
$$
\mathcal{AH}_L(U):=D \left(\mathcal{SH}_L(U)\right)= \{ Df \, | \, f \in \mathcal{SH}_L(U)\},
$$
is the set of axially harmonic functions. In \cite[Thm.~ 4.1]{CDPS}, the application of the operator $D$ to the second form of the left slice hyperholomorphic Cauchy kernel leads to the following result:  

\begin{equation}
	D(S^{-1}_L(s,q))=-2\mathcal{Q}_{c,s}^{-1}(q), \qquad s \notin [q],
\end{equation}
where $q=x_0+x_1 e_1+x_2 e_2+x_3 e_1 e_2$ is a quaternion.

In \cite{Polyf1, Polyf2}, the factorization $\Delta_4 = D \overline{D}$ was taken into consideration. This gives rise to the following diagram:  

\begin{equation} 
	\mathcal{SH}_L(U) \overset{\overline{D}}{\longrightarrow} \mathcal{AP}_2^L(U) \overset{D}{\longrightarrow} \mathcal{AM}_L(U),
\end{equation}  
where  
$$
\mathcal{AP}_2^L(U) := \overline{D} \bigl(\mathcal{SH}_L(U)\bigr) = \{ \overline{D}f \;|\; f \in \mathcal{SH}_L(U)\},
$$
is the set of axially polyanalytic functions of order $2$. The application of the conjugate Fueter operator to the second form of the left slice hyperholomorphic Cauchy kernel yields  

\begin{equation}
	\overline{D}\!\left(S^{-1}_L(s,q)\right) = -F_L^3(s,q)s + q_0 F_L^3(s,q), \qquad s \notin [q],
\end{equation}  
see \cite[Thm.~4.1]{Polyf1}. In \cite{Fivedim} we start considering the Clifford case specifically the case $n=5$. In this case the following factorizations of the Fueter-Sce maps (i.e. $\Delta^2_6$) were considered
\begin{equation}
\label{oper}
D, \quad \Delta_6, \quad \Delta_6 D, \quad \overline{D}, \quad \overline{D}^2, \quad D^2, \quad \Delta_6 \overline{D}.
\end{equation}
These can be summarized in the following diagram
\newline
\begin{center}
\begin{tikzpicture}[x=0.75pt,y=0.75pt,yscale=-1,xscale=1]
	
	\draw    (192.08,174.42) -- (256.45,128.99) ;
	\draw [shift={(258.08,127.83)}, rotate = 144.79] [color={rgb, 255:red, 0; green, 0; blue, 0 }  ][line width=0.75]    (10.93,-3.29) .. controls (6.95,-1.4) and (3.31,-0.3) .. (0,0) .. controls (3.31,0.3) and (6.95,1.4) .. (10.93,3.29)   ;
	\draw    (310.08,258.42) -- (383.44,207.97) ;
	\draw [shift={(385.08,206.83)}, rotate = 145.48] [color={rgb, 255:red, 0; green, 0; blue, 0 }  ][line width=0.75]    (10.93,-3.29) .. controls (6.95,-1.4) and (3.31,-0.3) .. (0,0) .. controls (3.31,0.3) and (6.95,1.4) .. (10.93,3.29)   ;
	\draw    (295.08,286.42) -- (364.43,333.62) ;
	\draw [shift={(366.08,334.75)}, rotate = 214.25] [color={rgb, 255:red, 0; green, 0; blue, 0 }  ][line width=0.75]    (10.93,-3.29) .. controls (6.95,-1.4) and (3.31,-0.3) .. (0,0) .. controls (3.31,0.3) and (6.95,1.4) .. (10.93,3.29)   ;
	\draw    (474.08,206.83) -- (539.54,260.56) ;
	\draw [shift={(541.08,261.83)}, rotate = 219.38] [color={rgb, 255:red, 0; green, 0; blue, 0 }  ][line width=0.75]    (10.93,-3.29) .. controls (6.95,-1.4) and (3.31,-0.3) .. (0,0) .. controls (3.31,0.3) and (6.95,1.4) .. (10.93,3.29)   ;
	\draw    (475.08,179.83) -- (533.52,133.08) ;
	\draw [shift={(535.08,131.83)}, rotate = 141.34] [color={rgb, 255:red, 0; green, 0; blue, 0 }  ][line width=0.75]    (10.93,-3.29) .. controls (6.95,-1.4) and (3.31,-0.3) .. (0,0) .. controls (3.31,0.3) and (6.95,1.4) .. (10.93,3.29)   ;
	\draw    (333.08,96.83) -- (386.51,55.07) ;
	\draw [shift={(388.08,53.83)}, rotate = 141.98] [color={rgb, 255:red, 0; green, 0; blue, 0 }  ][line width=0.75]    (10.93,-3.29) .. controls (6.95,-1.4) and (3.31,-0.3) .. (0,0) .. controls (3.31,0.3) and (6.95,1.4) .. (10.93,3.29)   ;
	\draw    (473.08,44.83) -- (526.64,95.87) ;
	\draw [shift={(528.08,97.25)}, rotate = 223.62] [color={rgb, 255:red, 0; green, 0; blue, 0 }  ][line width=0.75]    (10.93,-3.29) .. controls (6.95,-1.4) and (3.31,-0.3) .. (0,0) .. controls (3.31,0.3) and (6.95,1.4) .. (10.93,3.29)   ;
	\draw    (589.08,122.25) -- (668.49,182.05) ;
	\draw [shift={(670.08,183.25)}, rotate = 216.98] [color={rgb, 255:red, 0; green, 0; blue, 0 }  ][line width=0.75]    (10.93,-3.29) .. controls (6.95,-1.4) and (3.31,-0.3) .. (0,0) .. controls (3.31,0.3) and (6.95,1.4) .. (10.93,3.29)   ;
	\draw    (475.08,338.75) -- (558.38,287.88) ;
	\draw [shift={(560.08,286.83)}, rotate = 148.58] [color={rgb, 255:red, 0; green, 0; blue, 0 }  ][line width=0.75]    (10.93,-3.29) .. controls (6.95,-1.4) and (3.31,-0.3) .. (0,0) .. controls (3.31,0.3) and (6.95,1.4) .. (10.93,3.29)   ;
	\draw    (188,186) -- (255.69,255.32) ;
	\draw [shift={(257.08,256.75)}, rotate = 225.68] [color={rgb, 255:red, 0; green, 0; blue, 0 }  ][line width=0.75]    (10.93,-3.29) .. controls (6.95,-1.4) and (3.31,-0.3) .. (0,0) .. controls (3.31,0.3) and (6.95,1.4) .. (10.93,3.29)   ;
	\draw    (330.08,126.83) -- (383.64,178.45) ;
	\draw [shift={(385.08,179.83)}, rotate = 223.94] [color={rgb, 255:red, 0; green, 0; blue, 0 }  ][line width=0.75]    (10.93,-3.29) .. controls (6.95,-1.4) and (3.31,-0.3) .. (0,0) .. controls (3.31,0.3) and (6.95,1.4) .. (10.93,3.29)   ;
	\draw    (618.08,270.25) -- (681.59,213.58) ;
	\draw [shift={(683.08,212.25)}, rotate = 138.26] [color={rgb, 255:red, 0; green, 0; blue, 0 }  ][line width=0.75]    (10.93,-3.29) .. controls (6.95,-1.4) and (3.31,-0.3) .. (0,0) .. controls (3.31,0.3) and (6.95,1.4) .. (10.93,3.29)   ;
	
	\draw (129.07,168.79) node [anchor=north west][inner sep=0.75pt]  [rotate=-0.38] [align=left] {$\mathcal{SH}_L(U)$};
	\draw (173.64,206.53) node [anchor=north west][inner sep=0.75pt]  [rotate=-0.38]  {$D$};
	\draw (207.89,127.48) node [anchor=north west][inner sep=0.75pt]  [rotate=-0.38]  {$\overline{D}$};
	\draw (236.84,259.59) node [anchor=north west][inner sep=0.75pt]  [rotate=-0.38]  {$\mathcal{AH}^L_2( U)$};
	\draw (239,99.4) node [anchor=north west][inner sep=0.75pt]   {$\mathcal{AAH}_{(2,1)}^L(U) \ $};
	\draw (308.89,219.48) node [anchor=north west][inner sep=0.75pt]  [rotate=-0.38]  {$\overline{D}$};
	\draw (293.64,307.53) node [anchor=north west][inner sep=0.75pt]  [rotate=-0.38]  {$D$};
	\draw (384.58,182.4) node [anchor=north west][inner sep=0.75pt]    {$\mathcal{AAH}_{(1,1)}^L(U) \ $};
	\draw (375.58,321.4) node [anchor=north west][inner sep=0.75pt]    {$\overline{\mathcal{AAH}_{(1,1)}^L \ (U) \ }$};
	\draw (478.64,232.53) node [anchor=north west][inner sep=0.75pt]  [rotate=-0.38]  {$D$};
	\draw (486.89,135.48) node [anchor=north west][inner sep=0.75pt]  [rotate=-0.38]  {$\overline{D}$};
	\draw (545,263.4) node [anchor=north west][inner sep=0.75pt]    {$\mathcal{AH}^L_1(U)$};
	\draw (515,106.4) node [anchor=north west][inner sep=0.75pt]    {$\mathcal{AP}_{2}^L(U)$};
	\draw (393,36.4) node [anchor=north west][inner sep=0.75pt]    {$\mathcal{AP}_{3}^L( U)$};
	\draw (334.89,53.48) node [anchor=north west][inner sep=0.75pt]  [rotate=-0.38]  {$D$};
	\draw (656,189.48) node [anchor=north west][inner sep=0.75pt]    {$\mathcal{AM}_L( U)$};
	\draw (617.64,168.53) node [anchor=north west][inner sep=0.75pt]  [rotate=-0.38]  {$D$};
	\draw (609.89,236.48) node [anchor=north west][inner sep=0.75pt]  [rotate=-0.38]  {$\mathcal{\overline{D}}$};
	\draw (483,294.48) node [anchor=north west][inner sep=0.75pt]    {$\overline{D}$};
	\draw (471.64,66.53) node [anchor=north west][inner sep=0.75pt]  [rotate=-0.38]  {$D$};
	\draw (324.64,152.53) node [anchor=north west][inner sep=0.75pt]  [rotate=-0.38]  {$D$};

\end{tikzpicture}
\end{center}
where
$$ \mathcal{AP}_{3-\ell}^L(U)= \{ \Delta^\ell_{6} \overline{D}^{2-\ell}f\, | \, f \in \mathcal{SH}_L(U) \}, \qquad \ell=0,1,$$
is the set of axially polyanalytic functions of order 2 and 3,
$$ \mathcal{AH}_{3-\ell}^L(U)= \{D \Delta_6^{\ell-1}f\, | \, f \in \mathcal{SH}_L(U) \}, \qquad \ell=1,2,$$
is the set of axially harmonic and bi-harmonic functions,
$$ \mathcal{AAH}_{(2-\gamma,1)}^L(U)= \{D^\gamma \overline{D}f \, | \, f \in \mathcal{SH}_L(U)\}, \qquad \gamma=0,1, \quad \overline{\mathcal{AAH}^L_{(1,1)}(U)}:= \{D^2f\, | \, f \in \mathcal{SH}_L(U)\},$$
is the set of axially analytic harmonic functions of type $(2-\gamma,1)$ and axially anti-analytic-Harmonic functions of type $(1,1)$. For $s, x \in \mathbb{R}^{n+1}$ with $s \notin [x]$, the action of the operators in \eqref{oper} on the second form of the left slice hyperholomorphic Cauchy kernel yields

\begin{equation}
D(S^{-1}_L(s,x))=-4 \mathcal{Q}_{c,s}^{-1}(x), \qquad \Delta_6 D(S^{-1}_L(s,x))=16 \mathcal{Q}_{c,s}^{-2}(x),
\end{equation}
see \cite[Thm. 6.10 and Thm. 6.12]{Fivedim},
\begin{eqnarray}
	\nonumber
&& \Delta_6(S^{-1}_L(s,x))=8(s-\bar{x}) \mathcal{Q}_{c,s}^{-2}(x), \qquad \overline{D}(S^{-1}_L(s,x))=4(s-\bar{x}) \mathcal{Q}_{c,s}^{-2}(x)(s-x_0)+2 \mathcal{Q}_{c,s}^{-1}(x)\\ && \qquad \qquad \qquad D^2(S^{-1}_L(s,x))=	16 \mathcal{Q}_{c,s}^{-2}(x)(s-x_0)-8(s-\bar{x}) \mathcal{Q}_{c,s}^{-2}(x).
\end{eqnarray}

see \cite[Thm. 6.11, Thm. 6.13 and Thm. 6.15]{Fivedim}

\begin{equation}
\Delta_6 \overline{D}(S^{-1}_L(s,x))=-64 (s-\bar{x}) \mathcal{Q}_{c,s}^{-3}(x) (s-x_0), \qquad \overline{D}^2 (S^{-1}(s,x))=32 (s-\bar{x}) \mathcal{Q}_{c,s}^{-3}(x)(s-x_0)^3,
\end{equation}
see \cite[Thm. 6.13 and Thm. 6.14]{Fivedim}. In \cite{CDP25}, we studied the most general factorizations of the Fueter-Sce map in the Clifford setting for odd $n$. In particular, we proved that suitable factorizations of $\Delta_{n+1}^{\frac{n-1}{2}}$ give rise to the following refinement of the scheme \eqref{SC}:

\begin{equation}
\label{polyh}
	\begin{CD}
		&& \textcolor{black}{\mathcal{SH}_L(U)}  @>\ \    D \Delta^{m-1}_{n+1}>>\textcolor{black}{\mathcal{APH}^L_{h_n-m+1}(U)}@>\ \   \overline{D} \Delta_{n+1}^{h_n-m}>>\textcolor{black}{\mathcal{AM}_L(U)}, \qquad 1 \leq m \leq h_n,
	\end{CD}
\end{equation}
where
$$\mathcal{APH}^L_{h_n-m+1}(U)=\{D\Delta^{m-1}_{n+1}f \ :\ f\in \mathcal{SH}_L(U)\},$$
is the set of axially polyharmonic functions of order $h_n-m+1$. In \cite[Theorem 4.16]{CDP25}, the authors proved that the application of the operator $D\Delta^{m-1}_{n+1}$ leads to the following:
\begin{equation}
\label{new1}
D\Delta^{m-1}_{n+1}(S^{-1}_L(s,x))=\sigma_{n,m} \mathcal{Q}_{c,s}^{-m}(x),           \qquad s \notin [x],
\end{equation}
where $\sigma_{n,m}=2^{2m-1}(m-1)! (-h_n)_m$. Another interesting factorization of the Fueter-Sce map lead to the following scheme 
\begin{equation}
\label{cliff}
	\begin{CD}
		&& \textcolor{black}{\mathcal{SH}_L(U)}  @>\ \   \Delta_{n+1}^m >>\textcolor{black}{\mathcal{AAH}_{1,h_n-m}^L(U)}@>\ \
		\Delta ^{h_n-m}_{n+1}>>\textcolor{black}{\mathcal{AM}_L(U)}, \quad 1 \leq m \leq h_n-1
	\end{CD}
\end{equation}
where 
$$
\mathcal{AAH}^L_{(h_n-m,1)}(U)=\{ \Delta_{n+1}^{m} f(x) \ :\ f\in \mathcal{SH}_L(U)\}.
$$
is the set of axially analytic Harmonic functions of order $(h_n-m,1)$. In this case the application of the operator $\Delta_{n+1}^{m}$ to the second form of the slice hyperholomorphic Cauchy kernel lead to
\begin{equation}
\label{appL}
\Delta_{n+1}^{m}(S^{-1}_L(s,x))=\gamma_m (s-\bar{x}) \mathcal{Q}_{c,s}^{-m-1}(x),\qquad s \notin [x],
\end{equation}
where $\gamma_m=4^m m! (-h_n)_m $. The other possible factorizations of the operator $\Delta_{n+1}^{\frac{n-1}{2}}$ lead to the following factorization of \eqref{SC}:
\begin{equation}
	\begin{CD}
		&& \textcolor{black}{\mathcal{SH}_L(U)}  @>\ \    \overline{D}^{h_n-m} \Delta^{\ell}_{n+1}>>\textcolor{black}{\mathcal{AP}_{h_n-m+1}^L(U)}@>\ \   D^{h_n-m}>>\textcolor{black}{\mathcal{AM}_L(U)}, \quad 0 \leq \ell \leq h_n-1,
	\end{CD}
\end{equation}
where
$$
\mathcal{AP}_{h_n-m+1}^L(U)=\{\Delta_{n+1}^{m} \overline{D}^{h_n- m}f\ :\ f\in \mathcal{SH}_L(U)\}
$$
is the set of axially polyanalytic functions of order $h_n-m+1$. In this case the application of the operator $\Delta_{n+1}^{m} \overline{D}^{h_n- m}$ to the second form of the slice hyperholomorphic Cauchy kernel gives
\begin{equation}
\label{polyapp}
\Delta_{n+1}^{m} \overline{D}^{h_n- m}(S^{-1}_L(s,x))=\frac{(-1)^{h_n-\ell}}{(h_n-\ell)!} F_L^n(s,x)(s-x_0)^{h_n-\ell}, \qquad s \notin [x],
\end{equation} 
see \cite[Thm. 8.10]{CDP25}.

\begin{remark}
The factorizations \eqref{polyh} and \eqref{cliff} has been used in \cite{CDP25} to deduce a product for the $F$-functional calculus in the Clifford setting. 
\end{remark}

\begin{remark}
The sets of polyharmonic, polyanalytic, and Cliffordian functions have been extensively studied in the literature. More precisely, polyharmonic functions were considered in \cite{Aro}. Polyanalytic functions were first studied in relation to elasticity theory (see \cite{Russ}), and later a rigorous study of this class of functions was carried out in \cite{Balk}. Moreover, in \cite{FISH} the theory of polyanalytic functions found applications in signal analysis. The class of analytic harmonic functions, introduced and studied by G.~Laville and I.~Ramadanoff (see \cite{LR, LL}), was given the name of Cliffordian functions.

\end{remark} 

In this paper, our goal is to determine the most general possible factorizations of the Fueter--Sce map and their applications to the second form of the left slice hyperholomorphic Cauchy kernel. Generally speaking, the Fueter--Sce map can be expressed in terms of the Dirac operator and its conjugate as follows:
\begin{equation}
	\label{fact}
	\Delta_{n+1}^{\frac{n-1}{2}}
	= \underbrace{D \cdots D}_{n-1 \, \text{times}} \;
	\underbrace{\overline{D} \cdots \overline{D}}_{n-1 \, \text{times}}.
\end{equation}

Thus, we have two possible factorizations:
\begin{itemize}
	\item If we consider more conjugate Dirac operators than Dirac operators in \eqref{fact}, we obtain the factorization
\begin{equation}
\label{FF0}
	\overline{D}^{\beta} \, \Delta_{n+1}^{m}, 
\qquad m \in \mathbb{N}, \quad \beta + m \leq h_n.
\end{equation}
	\item On the other hand, if we consider more Dirac operators than conjugate Dirac operators in \eqref{fact}, we obtain the factorization
\begin{equation}
\label{FF1}
	D^{\beta} \, \Delta_{n+1}^{m}, 
\qquad m \in \mathbb{N}, \quad \beta + m \leq h_n.
\end{equation}
\end{itemize}

Now, we want to apply the operators \eqref{FF0} and \eqref{FF1} to the second form of the  slice hyperholomorphic Cauchy kernel. Before doing so, we need the following technical result.

\begin{remark}
We have presented the results on the factorization of the Fueter-Sce map only in the setting of left slice hyperholomorphic functions. However, analogous results also hold for right slice hyperholomorphic functions. Throughout this paper, we restrict ourselves to the left slice hyperholomorphic case, since the corresponding statements for the right case follow from similar computations.
\end{remark}

	\begin{lemma}
		\label{l_one}
Let $n$ be an odd integer and set $h_n=\frac{n-1}{2}$. Denote by $D$ the Dirac operator with respect to the variable $x$. For $s, x \in \mathbb{R}^{n+1}$ with $s \notin [x]$, we have
	\begin{align}
	D\left( (s-\bar x) \mathcal{Q}_{c,s}^{-m}(x) \right) & =-2(h_n-m+1) \mathcal{Q}_{c,s}^{-m}(x) \label{d_cauchy_ker}\\
	D\left( \mathcal{Q}_{c,s}^{-m}(x) \right) & =4m (s-x_0)\mathcal{Q}_{c,s}^{-m-1}(x)-2m(s-\bar x) \mathcal{Q}^{-m-1}_{c,s}(x)  \label{d2_cauchy_ker}\\
	D\left( (s-x_0)^k \mathcal{Q}_{c,s}^{-m}(x) \right) &=4m (s-x_0)^{k+1}\mathcal{Q}_{c,s}^{-m-1}(x)-2m(s-\bar x) \mathcal{Q}^{-m-1}_{c,s}(x) (s-x_0)^k \nonumber\\
	& \, \, \, \,\, \,-k(s-x_0)^{k-1} \mathcal{Q}_{c,s}^{-m}(x)  \label{d3_cauchy_ker}\\
	D\left( (s-\bar x) \mathcal{Q}_{c,s}^{-m}(x) (s-x_0)^k \right) & =2(m-h_n-1) \mathcal{Q}_{c,s}^{-m} (x)(s-x_0)^k \nonumber\\
	&\, \, \, \,\, \,-k(s-\bar x)\mathcal{Q}_{c,s}^{-m}(x)(s-x_0)^{k-1} \label{d4_cauchy_ker}
\end{align}
	\end{lemma}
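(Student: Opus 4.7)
The proof will be a direct computation, applying $D=\partial_{x_0}+\sum_{i=1}^n e_i\partial_{x_i}$ to each expression and simplifying using three pivotal observations: (a) the basic derivatives $\partial_{x_0}\mathcal{Q}_{c,s}=-2(s-x_0)$ and $\partial_{x_i}\mathcal{Q}_{c,s}=2x_i$ for $i\geq1$, giving by the chain rule
\[
\partial_{x_0}\mathcal{Q}_{c,s}^{-m}=2m(s-x_0)\mathcal{Q}_{c,s}^{-m-1}, \qquad \partial_{x_i}\mathcal{Q}_{c,s}^{-m}=-2mx_i\mathcal{Q}_{c,s}^{-m-1};
\]
(b) the identity $D(s-\bar x)=-1+\sum_i e_i^2=-(n+1)=-2(h_n+1)$; and (c) the factorization $\mathcal{Q}_{c,s}(x)=(s-x_0)^2+|\underline x|^2$, which is what will allow the $\mathcal{Q}_{c,s}^{-m-1}$ terms to collapse to $\mathcal{Q}_{c,s}^{-m}$ and produce the dimension-dependent coefficient.

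For \eqref{d2_cauchy_ker} the calculation is immediate: summing the partial derivatives one gets $D\mathcal{Q}_{c,s}^{-m}=2m(s-x_0)\mathcal{Q}_{c,s}^{-m-1}-2m\underline x\,\mathcal{Q}_{c,s}^{-m-1}$, and the bracket is rewritten as $2(s-x_0)-(s-\bar x)$ since $\underline x=(s-x_0)-(s-\bar x)+(s-x_0)$. For \eqref{d3_cauchy_ker}, since $(s-x_0)^k$ commutes with $\mathcal{Q}_{c,s}$, the Leibniz rule reduces the computation to formula \eqref{d2_cauchy_ker} plus the contribution $-k(s-x_0)^{k-1}\mathcal{Q}_{c,s}^{-m}$ arising from $\partial_{x_0}(s-x_0)^k$.

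For \eqref{d_cauchy_ker}, I would decompose $s-\bar x=(s-x_0)+\underline x$ and compute $D[((s-x_0)+\underline x)\mathcal{Q}_{c,s}^{-m}]$ term by term. The cross-terms $\underline x(s-x_0)-(s-x_0)\underline x$ vanish, the $\mathcal{Q}_{c,s}^{-m}$ contributions total $-(1+n)\mathcal{Q}_{c,s}^{-m}$ (a $-1$ from $\partial_{x_0}(s-x_0)$ together with $\sum e_i^2=-n$), and the $\mathcal{Q}_{c,s}^{-m-1}$ coefficient collapses via observation (c) to $2m[(s-x_0)^2+|\underline x|^2]\mathcal{Q}_{c,s}^{-m-1}=2m\mathcal{Q}_{c,s}^{-m}$; combining yields $(2m-n-1)\mathcal{Q}_{c,s}^{-m}=-2(h_n-m+1)\mathcal{Q}_{c,s}^{-m}$. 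For \eqref{d4_cauchy_ker} the same scheme is applied to $(s-\bar x)(s-x_0)^k\mathcal{Q}_{c,s}^{-m}$: the additional factor $(s-x_0)^k$ is scalar, commutes with $\mathcal{Q}_{c,s}$ and with $\underline x$, and contributes only the extra derivative $-k(s-x_0)^{k-1}(s-\bar x)\mathcal{Q}_{c,s}^{-m}$ from $\partial_{x_0}(s-x_0)^k$, leaving the rest of the argument verbatim as in \eqref{d_cauchy_ker}.

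The main obstacle is bookkeeping: since $D$ multiplies by $e_i$ on the left while the kernels are products of paravector and scalar factors in prescribed orders, one has to repeatedly justify moving $\mathcal{Q}_{c,s}^{-m}$ and $(s-x_0)^k$ past $\underline x$ or $(s-\bar x)$. This is legitimate because $\mathcal{Q}_{c,s}(x)=(s-x_0)^2+|\underline x|^2$ is a sum of commuting (real, up to $s$-dependence) quantities, and $(s-x_0)^k$ commutes with both $\mathcal{Q}_{c,s}$ and $\underline x$; once this is set up, the essential algebraic miracle is the closing identity $-(n+1)+2m=-2(h_n-m+1)$, which couples the dimensional constant $n$ produced by $\sum e_i^2$ to the differentiation order $m$.
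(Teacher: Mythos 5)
Your overall strategy---applying $D$ termwise, using $\partial_{x_0}\mathcal{Q}_{c,s}(x)=-2(s-x_0)$ and $\partial_{x_i}\mathcal{Q}_{c,s}(x)=2x_i$, exploiting $\mathcal{Q}_{c,s}(x)=(s-x_0)^2+|\underline{x}|^2$ to collapse the $\mathcal{Q}_{c,s}^{-m-1}$ terms, and reducing \eqref{d3_cauchy_ker}--\eqref{d4_cauchy_ker} to \eqref{d_cauchy_ker}--\eqref{d2_cauchy_ker} by a Leibniz rule---is exactly the paper's, and the right-hand sides you arrive at are correct. However, two of the justifications you state are false in the Clifford setting for $n\geq 2$, and taken literally they put factors in the wrong order.

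First, $(s-x_0)^k$ does \emph{not} commute with $\underline{x}$ (hence not with $s-\bar{x}$): since $s-x_0=(s_0-x_0)+\underline{s}$ and $\underline{s}\,\underline{x}-\underline{x}\,\underline{s}$ is a generically nonzero $2$-vector, the commutator vanishes only when $\underline{s}$ and $\underline{x}$ are parallel. For the same reason, the cross-terms in your computation of \eqref{d_cauchy_ker} are not $\underline{x}(s-x_0)-(s-x_0)\underline{x}$ --- that expression does not vanish. What actually cancels are two terms that both carry $\underline{x}$ on the \emph{left}: $-2m\,\underline{x}(s-x_0)\mathcal{Q}_{c,s}^{-m-1}(x)$, coming from $\sum_i e_i\,(s-x_0)\,\partial_{x_i}\mathcal{Q}_{c,s}^{-m}(x)$ (the real scalar $x_i$ may be pulled past $(s-x_0)$, the unit $e_i$ may not), and $+2m\,\underline{x}(s-x_0)\mathcal{Q}_{c,s}^{-m-1}(x)$, from $\partial_{x_0}$ acting on $\underline{x}\,\mathcal{Q}_{c,s}^{-m}(x)$. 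Likewise, your extra term for \eqref{d4_cauchy_ker} is written as $-k(s-x_0)^{k-1}(s-\bar{x})\mathcal{Q}_{c,s}^{-m}(x)$, which differs from the correct $-k(s-\bar{x})\mathcal{Q}_{c,s}^{-m}(x)(s-x_0)^{k-1}$ by precisely the commutation you are not entitled to. The repair is what the paper does: never move $(s-x_0)^k$ past $\underline{x}$ or $s-\bar{x}$. It suffices that $(s-x_0)^k$ commutes with $\mathcal{Q}_{c,s}(x)$ (both are real-coefficient polynomials in the single paravector $s$) and that $\partial_{x_i}(s-x_0)^k=0$ for $i\geq 1$, which give $D\bigl(g\,(s-x_0)^k\bigr)=(Dg)(s-x_0)^k-k\,g\,(s-x_0)^{k-1}$ for $g=\mathcal{Q}_{c,s}^{-m}(x)$ or $g=(s-\bar{x})\mathcal{Q}_{c,s}^{-m}(x)$, with the new factor staying on the right. (There is also a sign slip in your inline identity for \eqref{d2_cauchy_ker}: the correct relation is $\underline{x}=(s-\bar{x})-(s-x_0)$, although the formula you deduce from it is the right one.)
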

	\begin{proof}
		First we prove \eqref{d_cauchy_ker}. We observe that 
			\begingroup\allowdisplaybreaks
		\begin{align*}
			D\left( (s-\bar x) \mathcal{Q}_{c,s}^{-m}(x) \right) & = \partial_{x_0}(\left( (s-\bar x) \mathcal{Q}_{c,s}^{-m}(x) \right))+\sum_{i=1}^n e_i \partial_{x_i} \left( (s-\bar x) \mathcal{Q}_{c,s}^{-m}(x) \right) \\
			&= - \mathcal{Q}_{c,s}^{-m}(x)+2m(s-\bar x) (s-x_0) \mathcal{Q}_{c,s}^{-m-1}(x)\\
			& +  \sum_{i=1}^n e_i \left( e_i \mathcal{Q}_{c,s}^{-m}(x) -2m x_i (s-\bar x) \mathcal{Q}_{c,s}^{-m-1}(x) \right)\\
			&= (-1-n) \mathcal{Q}_{c,s}^{-m}(x) +2m \left( (s-\bar x)s -  x(s-\bar x)  \right) \mathcal{Q}_{c,s}^{-m-1}(x)\\
			&= 2\left (  m-\frac {n+1}{2} \right )\mathcal{Q}_{c,s}^{-m}(x)= 2\left (  m-h_n -1 \right )\mathcal{Q}_{c,s}^{-m}(x).
		\end{align*}
		\endgroup
		Now we prove \eqref{d2_cauchy_ker}. We have  
		\begin{align*}
			D\left( \mathcal{Q}_{c,s}^{-m}(x) \right) & = \partial_{x_0}\left( \mathcal{Q}_{c,s}^{-m}(x) \right)+\sum_{i=1}^n e_i \partial_{x_i} \left( \mathcal{Q}_{c,s}^{-m}(x) \right)\\
			 &= 2m (s-x_0) \mathcal{Q}_{c,s}^{-m-1}(x)-2\sum_{i=1}^n e_i m x_i \mathcal{Q}_{c,s}^{-m-1}(x)\\
			& = 2m(s-x)\mathcal{Q}_{c,s}^{-m-1}(x)\\
			&= 2m(s-\bar x) \mathcal{Q}_{c,s}^{-m-1}(x)-4m \unx \mathcal{Q}_{c,s}^{-m-1}(x) \\
			& = 2m(s-\bar x) \mathcal{Q}_{c,s}^{-m-1}(x)+4m (s-x_0+x_0 - \unx -s) \mathcal{Q}_{c,s}^{-m-1}(x)\\
			&=  2m(s-\bar x) \mathcal{Q}_{c,s}^{-m-1}(x) -4m (s-\bar x) \mathcal{Q}_{c,s}^{-m-1}(x) +4m \mathcal{Q}_{c,s}^{-m-1}(x) (s-x_0)\\
			&= 4m \mathcal{Q}_{c,s}^{-m-1}(x) (s-x_0) -2m(s-\bar x) \mathcal{Q}_{c,s}^{-m-1}(x) .
		\end{align*}
Formulas \eqref{d3_cauchy_ker} and \eqref{d4_cauchy_ker} can be derived from \eqref{d_cauchy_ker} and \eqref{d2_cauchy_ker}, respectively, using the Leibniz rule. In fact, we have 
\begin{align*} D\left ( \mathcal{Q}_{c,s}^{-m}(x) (s-x_0)^k \right) & =  \partial_{x_0}( \mathcal{Q}_{c,s}^{-m}(x)) (s-x_0)^k + \mathcal{Q}_{c,s}^{-m}(x) \partial_{x_0} (s-x_0)^k\\
& +\sum_{i=1}^n e_i\partial_{x_i}(\mathcal Q^{-m}_{c,s}(x)) (s-x_0)^k + e_i \mathcal Q_{c,s}(x) \partial_{x_i}((s-x_0)^k)\\
&=D\left( \mathcal{Q}^{-m}_{c,s}(x) \right) (s-x_0)^k + \mathcal{Q}^{-m}_{c,s}(x) D\left( (s-x_0)^k\right)
\end{align*}
and, thus, we obtain

		\begin{align*}
			& D\left( \mathcal{Q}_{c,s}^{-m}(x) (s-x_0)^k \right)  =D\left( \mathcal{Q}^{-m}_{c,s}(x) \right) (s-x_0)^k + \mathcal{Q}^{-m}_{c,s}(x) D\left( (s-x_0)^k\right)\\
			& = [4m \mathcal{Q}^{-m-1}_{c,s}(x)(s-x_0) - 2m (s- \bar x)\mathcal{Q}_{c,s}^{-m-1}(x) ] (s-x_0)^k -k \mathcal{Q}_{c,s}^{-m}(x) (s-x_0)^{k-1}\\
			&= 4m \mathcal{Q}^{-m-1}_{c,s}(x)(s-x_0) (s-x_0)^{k+1} - 2m (s- \bar x) \mathcal{Q}^{-m-1}_{c,s}(x) (s-x_0)^k -k \mathcal{Q}_{c,s}^{-m}(x) (s-x_0)^{k-1}.
		\end{align*}  	 
		Moreover
		\begin{align*}
			D\left( (s-\bar x) \mathcal{Q}_{c,s}^{-m}(x) (s-x_0)^k \right) & =D\left( (s-\bar x) \mathcal{Q}_{c,s}^{-m}(x) \right) (s-x_0)^k + (s-\bar x) \mathcal{Q}_{c,s}^{-m}(x) D\left( (s-x_0)^k\right)\\
			& = 2(m-h_n-1) \mathcal{Q}_{c,s}^{-m}(x) (s-x_0)^k -k(s-\bar x) \mathcal{Q}_{c,s}^{-m}(x) (s-x_0)^{k-1}.
		\end{align*}  	 
	\end{proof}
By combining the above result with the identity $D + \overline{D} = 2 \,\partial_{x_0}$, one obtains the following result:

	\begin{lemma}
		\label{l_one1}
Let $n$ be an odd integer and set $h_n=\frac{n-1}{2}$. Denote by $D$ the Dirac operator with respect to the variable $x$. For $s, x \in \mathbb{R}^{n+1}$ with $s \notin [x]$, we have
		\begin{align*}
			\overline D\left( (s-\bar x) \mathcal{Q}_{c,s}^{-\ell}(x) \right) & =2(h_n-\ell) \mathcal{Q}_{c,s}^{-\ell}(x) +4\ell(s-\bar x )(s-x_0) \mathcal{Q}_{c,s}^{-\ell-1}(x) \\
			\overline D\left( \mathcal{Q}_{c,s}^{-\ell}(x) \right) & =2\ell (s-\bar x)\mathcal{Q}_{c,s}^{-\ell-1}(x) \\
			\overline D\left( (s-x_0)^k \mathcal{Q}_{c,s}^{-\ell}(x) \right) &=-k(s-x_0)^{k-1} \mathcal{Q}^{-\ell}_{c,s}(x)+2\ell(s-\bar{x})(s-x_0)^k \mathcal{Q}^{-\ell-1}_{c,s}(x)\\
			\overline D\left( (s-\bar x) \mathcal{Q}_{c,s}^{-\ell}(x) (s-x_0)^k \right) & =2(h_n-\ell) \mathcal{Q}_{c,s}^{-\ell}(x) (s-x_0)^k+4\ell(s-\bar x)\mathcal{Q}_{c,s}^{-\ell-1}(x)(s-x_0)^{k+1} \\
			& -k(s-\bar x)(s-x_0)^{k-1} \mathcal{Q}_{c,s}^{-\ell}(x) \nonumber
		\end{align*}
	\end{lemma}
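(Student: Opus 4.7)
The plan is to exploit the identity $\overline{D} = 2\partial_{x_0} - D$ signalled in the hint just above the statement. Since Lemma \ref{l_one} already supplies closed formulas for $D$ acting on each of the four building blocks, it suffices to compute the corresponding $\partial_{x_0}$-derivatives and combine.

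The three elementary identities needed at the start are $\partial_{x_0}(s-\bar x) = -1$ (because $\bar x = x_0 - \underline{x}$), $\partial_{x_0}(s-x_0)^k = -k(s-x_0)^{k-1}$, and, from $\mathcal{Q}_{c,s}(x) = s^2 - 2x_0 s + |x|^2$ together with the chain rule, $\partial_{x_0}\mathcal{Q}_{c,s}^{-\ell}(x) = 2\ell(s-x_0)\mathcal{Q}_{c,s}^{-\ell-1}(x)$. The Leibniz rule with respect to the scalar variable $x_0$ then delivers $\partial_{x_0}$ applied to each of the four expressions in the statement.

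For the first identity, one finds $2\partial_{x_0}\bigl((s-\bar x)\mathcal{Q}_{c,s}^{-\ell}(x)\bigr) = -2\mathcal{Q}_{c,s}^{-\ell}(x) + 4\ell(s-\bar x)(s-x_0)\mathcal{Q}_{c,s}^{-\ell-1}(x)$, and subtracting \eqref{d_cauchy_ker} of Lemma \ref{l_one} (with $m=\ell$) produces a constant term $(-2 + 2(h_n-\ell+1))\mathcal{Q}_{c,s}^{-\ell}(x) = 2(h_n-\ell)\mathcal{Q}_{c,s}^{-\ell}(x)$, matching the claim. The second identity is even easier: the $\partial_{x_0}$-term equals $4\ell(s-x_0)\mathcal{Q}_{c,s}^{-\ell-1}(x)$, and subtracting \eqref{d2_cauchy_ker} cancels the $(s-x_0)$-term exactly, leaving $2\ell(s-\bar x)\mathcal{Q}_{c,s}^{-\ell-1}(x)$. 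The third and fourth identities follow by the same recipe, using \eqref{d3_cauchy_ker} and \eqref{d4_cauchy_ker} respectively; the only extra care is that the Leibniz rule on $(s-x_0)^k$ produces the term $-k(s-x_0)^{k-1}(\cdots)$, which after doubling becomes $-2k(s-x_0)^{k-1}(\cdots)$ and then combines with the $-k(s-x_0)^{k-1}(\cdots)$ coming from Lemma \ref{l_one} (which enters with the opposite sign) to produce exactly the stated coefficient $-k$.

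No step presents a genuine obstacle; the whole proof is essentially bookkeeping, and the main thing to watch is the sign management when combining $2\partial_{x_0}$ with $-D$, as well as keeping the noncommutative factors in the correct order (both $(s-\bar x)$ and $(s-x_0)^k$ are paravector-valued and must be kept on the sides where Lemma \ref{l_one} places them). Once the four computations are written out, the statements are read off directly.
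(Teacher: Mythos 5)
Your proposal is correct and follows exactly the route the paper itself indicates: the paper gives no written proof of Lemma \ref{l_one1}, stating only that it follows from Lemma \ref{l_one} combined with the identity $D+\overline{D}=2\partial_{x_0}$, which is precisely the computation you carry out. All four sign/coefficient combinations check out (e.g. $-2+2(h_n-\ell+1)=2(h_n-\ell)$ and $-2k-(-k)=-k$), so nothing is missing.
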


We now have all the necessary tools to demonstrate the application of the operator \eqref{FF0} to the second form of the left slice hyperholomorphic Cauchy kernel.

	\begin{theorem}\label{p11}
		Let $n$ be a fixed odd number and $h_n:=(n-1)/2$. Let $m$, $\beta \in \mathbb{N}$ be such that $m+\beta\leq h_n$. We assume that $s, x \in \mathbb{R}^{n+1}$ with $s \notin [x]$, and we denote by $D$ and $\Delta_{n+1}$ the Dirac and Laplace operators, respectively, applied with respect to the variable $x$. We set $\gamma_m=4^m m! (-h_n)_m$. If $\beta=2k_1+1$, where $k_1\in\mathbb N$  we have
		\begin{eqnarray}\label{do}
			&& D^{\beta}\left( \Delta^m_{n+1} S^{-1}_L(s,x) \right)= \frac{2^{\beta} (h_n-m) \gamma_m}{m!}  \left( (s-\bar x) \sum_{j=0}^{k_1-1} a^1_{j,k_1,m} \mathcal{Q}_{c,s}^{-m-j-2-k_1}(x) (s-x_0)^{2j+1} \right. \nonumber\\
			&&\left. -\sum_{j=0}^{k_1} b^1_{j,k_1,m}\mathcal{Q}_{c,s}^{-m-1-k_1-j}(x) (s-x_0)^{2j}  \right),
		\end{eqnarray}
		where  
		$$
		a^1_{j,k_1,m}:=2^{2j+1} (m+k_1+1+j)! (k_1-j-1)! \binom{k_1+j}{2j+1} \binom{h_n-m-k_1-j-2}{h_n-m-2k_1-1}
		$$
		and
		$$
		b^1_{j,k_1,m}:= 2^{2j}  (k_1-j)! (m+k_1+j)! \binom{h_n-m-k_1-j-1}{h_n-m-2k_1-1}  \binom{k_1+j}{2j} 
		$$
		If $\beta=2k_2$, where $k_2\in\mathbb N$  we have
		\begin{eqnarray}\label{de}
			&& D^{\beta} \left( \Delta^m_{n+1} S^{-1}_L(s,x)\right )= \frac{2^{\beta} (h_n-m) \gamma_m}{m!} \left( (s-\bar x) \sum_{j=0}^{k_2-1} a^2_{j,k_2,m} \mathcal{Q}_{c,s}^{-m-j-1-k_2}(x) (s-x_0)^{2j}\right.\nonumber \\
			&&\left. -\sum_{j=0}^{k_2-1} b^2_{j,k_2,m} \mathcal{Q}_{c,s}^{-m-1-k_2-j}(x) (s-x_0)^{2j+1}  \right),
		\end{eqnarray}
		where 
		$$
		a^2_{j,k_2,m}:=2^{2j} (m+k_2+j)! (k_2-j-1)! \binom{k_2+j-1}{2j} \binom{h_n-m-k_2-1-j}{h_n-m-2k_2} 
		$$
		and 
		$$
		b^2_{j,k_2,m}:=2^{2j+1} (k_2-j-1)! (m+k_2+j)! \binom{h_n-m-k_2-1-j}{h_n-m-2k_2}\binom{k_2+j}{2j+1}
		$$
	\end{theorem}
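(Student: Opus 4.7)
The strategy is to start from the identity \eqref{appL}, which gives $\Delta^m_{n+1}\bigl(S^{-1}_L(s,x)\bigr) = \gamma_m(s-\bar x)\mathcal{Q}_{c,s}^{-m-1}(x)$, and then apply $D$ iteratively $\beta$ times using the four differentiation rules of Lemma \ref{l_one}. A structural observation is that, at every stage, the result is a finite sum of terms of two types: those carrying an overall $(s-\bar x)$-prefactor and those without it, each multiplied by a power of $\mathcal{Q}_{c,s}^{-1}(x)$ and a power of $(s-x_0)$. The parities of the exponents of $(s-x_0)$ in the two types of terms alternate with each application of $D$, which is why the answer takes the two different shapes \eqref{do} and \eqref{de} according to whether $\beta$ is odd or even. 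Accordingly, the proof proceeds by induction on $\beta$.

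\textbf{Base case.} For $\beta=1$, so $k_1=0$, formula \eqref{d_cauchy_ker} applied to $\gamma_m(s-\bar x)\mathcal{Q}_{c,s}^{-m-1}(x)$ yields $-2\gamma_m(h_n-m)\mathcal{Q}_{c,s}^{-m-1}(x)$. In \eqref{do} the first sum is empty (since $k_1-1=-1$) and only the term $j=0$ in the second sum survives, with $b^1_{0,0,m}=m!$; this is in perfect agreement with the claim.

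\textbf{Inductive step.} Assuming the formula for $\beta$ is established, we compute $D$ termwise: \eqref{d3_cauchy_ker} is used on every summand of the form $\mathcal{Q}_{c,s}^{-\ell}(x)(s-x_0)^{p}$, while \eqref{d4_cauchy_ker} is used on every summand of the form $(s-\bar x)\mathcal{Q}_{c,s}^{-\ell}(x)(s-x_0)^{p}$. The output is again a sum of terms of the two types appearing in the target formula for $\beta+1$, but with the parity of the $(s-x_0)$-exponent shifted by one in each component. After re-collecting terms with a fixed triple (prefactor type, exponent of $\mathcal{Q}_{c,s}^{-1}(x)$, exponent of $(s-x_0)$), each such triple receives exactly two contributions, one from an $(s-\bar x)$-term and one from a $\mathcal{Q}$-term of the inductive hypothesis whose indices shift by one. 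One finally identifies the collected coefficients with $a^{1,2}_{\bullet,\bullet,m}$ and $b^{1,2}_{\bullet,\bullet,m}$ at level $\beta+1$.

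\textbf{Main obstacle.} The real work lies in verifying that the coefficients produced by this bookkeeping match the stated closed forms. This reduces to Pascal-type absorption identities such as $\binom{k+j}{2j+1}+\binom{k+j}{2j}=\binom{k+j+1}{2j+1}$ and analogous ones for the $\binom{h_n-m-\bullet}{\bullet}$ factors, combined with the factorial manipulations induced by the multiplicative weights $4\ell$, $2\ell$, $k$ appearing in Lemma \ref{l_one}. A useful sanity check is that the overall factor $(h_n-m)$ persists as a common prefactor at every stage: by the Fueter-Sce theorem, for $m=h_n$ the function $\Delta^{h_n}_{n+1}S^{-1}_L(s,x)$ is already axially monogenic, hence annihilated by $D^\beta$ for every $\beta\geq 1$, which forces $(h_n-m)$ as a common factor. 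Likewise, the doubling $2^\beta\to 2^{\beta+1}$ of the leading factor tracks the factor of $2$ present in every derivative formula of Lemma \ref{l_one}.
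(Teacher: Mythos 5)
Your proposal follows essentially the same route as the paper: starting from \eqref{appL}, inducting on $\beta$ (the paper phrases this as a double induction alternating between the odd case $k_1$ and the even case $k_2=k_1+1$), applying \eqref{d3_cauchy_ker} and \eqref{d4_cauchy_ker} termwise, regrouping by whether a term carries the prefactor $(s-\bar x)$, and matching coefficients via Stifel/Pascal identities — which is precisely what the paper's Appendix carries out, and your base-case check $b^1_{0,0,m}=m!$ and the two sanity checks (persistence of the factor $h_n-m$ and the doubling $2^\beta\to 2^{\beta+1}$) are consistent with the paper's computation. The only part you leave unexecuted is the explicit verification of the recurrences relating $a^{1,2}_{\bullet,\bullet,m}$ and $b^{1,2}_{\bullet,\bullet,m}$ across consecutive parities, which is exactly the content the paper isolates in its Appendix.
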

	
	\begin{remark}
We have summarized the properties of the coefficients $a^1_{j,k_1,m}$, $b^1_{j,k_1,m}$, $a^2_{j,k_1,m}$, and $b^2_{j,k_1,m}$ that are needed for the proof of Theorem \ref{p11} in the Appendix.
	\end{remark}

	\begin{proof}
		We prove the theorem by a double induction over $k_1 \in \mathbb{N}_0$, $k_2\in\mathbb N$. First, we prove the formulas \eqref{do} and \eqref{de} for $\beta=1$ ($k_1=0$) and $\beta =2$ ($k_2=1$), respectively. By \eqref{appL} and \eqref{d_cauchy_ker} we have
		\[
		D\left( \Delta^m_{n+1} S^{-1}_L(s,x) \right)=\gamma_m D \left( (s-\bar x)\qcsa{-m-1}\right)=-2\gamma_m (h_n-m) \qcsa{-m-1} ,
		\]
		which is formula \eqref{do} when $k_1=0$ if we understood the sum $\sum_{k=0}^{-1}$ to be zero. Moreover, by \eqref{d2_cauchy_ker} we have  
			\begingroup\allowdisplaybreaks
		\begin{align*}
			& D^2\left( \Delta^m_{n+1}S^{-1}_L(s,x) \right)=-2 \gamma_m (h_n-m) D \left( \qcsa{-m-1}\right) \\
			& =-2 \gamma_m (h_n-m) \left( 4(m+1) \qcsa{-m-2}(s-x_0) -2(m+1) (s-\bar x) \qcsa{-m-2} \right)\\
			&= 2^2 \gamma_m (h_n-m) \left( (s-\bar x) (m+1) \qcsa{-m-2} -2(m+1) \qcsa{-m-2}(s-x_0)  \right),
		\end{align*}
		\endgroup
		which is formula \eqref{de} when $k_2=1$. Now we prove that if the formulas \eqref{do} and \eqref{de} hold to be true respectively for $\ku$ and $\kd=\ku+1$ then the formula \eqref{do} holds to be true for $\ku+1$. By \eqref{appL} and the inductive hypothesis we have
			\begingroup\allowdisplaybreaks
		\begin{align*}
			& D^{2 \left (\ku+1 \right) +1} \left (\Delta^m_{n+1} S^{-1}_L(s,x) \right) =\gamma_m D\left( D^{2\kd} \left(\Delta^m_{n+1} S^{-1}_L(s,x)\right) \right)\\
			&=\frac{2^{2k_2} (h_n-m) \gamma_m}{m!} D \left( (s-\bar x) \sum_{j=0}^{\kd-1} a^2_{j, k_2, m} \mathcal{Q}_{c,s}^{-m-j-1-\kd}(x) (s-x_0)^{2j} \right. \\
			& \left. -\sum_{j=0}^{k_2-1} b^2_{j,k_2,m} \mathcal{Q}_{c,s}^{-m-1-\kd-j}(x) (s-x_0)^{2j+1}  \right).
		\end{align*} 
		\endgroup
		We compute separately the Dirac operator over the two summations. By \eqref{d4_cauchy_ker} we have
			\begingroup\allowdisplaybreaks
		\begin{align}\label{f1}
			D & \left( (s-\bar x) \sum_{j=0}^{\kd-1} a^2_{j,k_2,m} \mathcal{Q}_{c,s}^{-m-j-1-\kd}(x) (s-x_0)^{2j} \right)  \\
			&=  \sum_{j=0}^{\kd-1}  -2 a^2_{j,k_2,m} (h_n-m-j-k_2) \mathcal{Q}_{c,s}^{-m-j-1-\kd}(x) (s-x_0)^{2j} \nonumber\\
			& + (s-\bar x) \sum_{j=0}^{\kd-2} (-2j-2) a^2_{j+1,k_2,m}  \mathcal{Q}_{c,s}^{-m-j-2-\kd}(x) (s-x_0)^{2j+1}\nonumber
		\end{align}	
		\endgroup
		where in the last summation we replaced $j-1$ with $j$, and, by \eqref{d3_cauchy_ker}, we also have
			\begingroup\allowdisplaybreaks
		\begin{align}\label{f2}
			& D  \left(  \sum_{j=0}^{\kd-1} b^2_{j,k_2,m} \mathcal{Q}_{c,s}^{-m-1-\kd-j}(x) (s-x_0)^{2j+1}  \right) \nonumber\\
			&  = \sum_{j=1}^{\kd} 4 (m+k_2+j) b^2_{j-1,k_2 ,m} \mathcal{Q}_{c,s}^{-m-1-\kd-j}(x)  (s-x_0)^{2j} \\
			& - (s-\bar x) \sum_{j=0}^{\kd-1} 2(m+k_2+j+1) b^2_{j,k_2,m}  \mathcal{Q}_{c,s}^{-m-2-\kd-j}(x)  (s-x_0)^{2j+1} \nonumber\\
			&- \sum_{j=0}^{\kd-1} (2j+1) b^2_{j,k_2,m}  \mathcal{Q}_{c,s}^{-m-1-\kd-j}(x)  (s-x_0)^{2j}. \nonumber
		\end{align}
		\endgroup
		where in the first summation of \eqref{f2}, we replaced the index $j+1$ with $j$. Now we subtract the terms in \eqref{f2} to \eqref{f1}, and we gather them into two families: those multiplied by $(s-\bar x)$ and those that are not multiplied by $(s-\bar x)$. Thus, we obtain
			\begingroup\allowdisplaybreaks
		\begin{align}
			& \gamma_m D\left( D^{2\kd}  \left(\Delta^m_{n+1} S^{-1}_L(s,x)\right) \right) = \nonumber\\
			& = \frac {2^{2k_2  }  (h_n-m)\gamma_m}{m!}\left( (s-\bar x)\sum_{j=0}^{\kd-2} \left((-2j-2) a^2_{j+1,k_2,m} +2(m+k_2+j+1) b^2_{j,k_2,m} \right) \nonumber \right. \\ 
			&\times \mathcal{Q}_{c,s}^{-m-2-\kd-j}(x)  (s-x_0)^{2j+1} + (s-\bar x)2(m+2k_2)b^2_{k_2-1,k_2,m}  \mathcal{Q}_{c,s}^{-m-1-2\kd}(x)  (s-x_0)^{2k_2-1}  \nonumber\\
			& -( 2a^2_{0,k_2,m}(h_n-m-k_2) - b^2_{0,k_2,m}) \mathcal{Q}_{c,s}^{-m-1-k_2}(x)\nonumber \\
			& -\sum_{j=1}^{k_2-1} \left[ 2 a^2_{j,k_2,m} (h_n-m-j-k_2) + 4(m+k_2+j) b^2_{j-1,k_2,m}-(2j+1) b^2_{j,k_2,m} \right]  \nonumber \\
			& \left.\times \mathcal{Q}_{c,s}^{-m-1-\kd-j}(x)  (s-x_0)^{2j} -4(m+2k_2) b^2_{k_2-1,k_2,m}  \mathcal{Q}_{c,s}^{-m-1-2\kd}(x)  (s-x_0)^{2k_2}  \right).\nonumber
		\end{align}
		\endgroup
		By the properties of the coeffcients given in \eqref{c1}, \eqref{c2}, \eqref{c3}, \eqref{c4} and \eqref{c5} we have
			\begingroup\allowdisplaybreaks
		\begin{align}\label{f3}
			& \gamma_m D\left( D^{2\kd} \left(\Delta_{n+1}^m S^{-1}_L(s,x)\right) \right) = \nonumber\\
			& = \frac{\gamma_m (h_n-m) 2^{2k_{2} +1 }}{m!}\left( (s-\bar x)\sum_{j=0}^{\kd-1} a^1_{j,k_2,m}  \mathcal{Q}_{c,s}^{-m-2-\kd-j}(x)  (s-x_0)^{2j+1} \right. \\
			& \left. -\sum_{j=0}^{k_2} b^1_{j,k_2,m} \qcsa{-m-1-j-k_2} (s-x_0)^{2j}\right)\nonumber.
		\end{align}
		\endgroup
		Now we observe that \eqref{f3} coincide with $\eqref{do}$ with $k_1$ replaced by $k_2$. Since $k_2=k_1+1$ we have proved \eqref{do} with $k_1$ replaced by $k_1+1$. Thus the inductive step for \eqref{do} is proved. 
		\newline
		Now we perform a second step where we prove that if the formulas \eqref{do} hold to be true for $k_1+1$, then the formula \eqref{de} holds to be true for $k_2=k_1+2$.  By the inductive hypothesis, together with formula \eqref{appL} and the fact that $k_1 = k_2 - 1$, we obtain
	\begingroup\allowdisplaybreaks
		\begin{align*}
			&D^{\beta} \left( \Delta^m_{n+1} S^{-1}_L(s,x) \right)=\gamma_m D^{2(k_2+1)}((s-\bar x) \qcsa{-m-1})=\gamma_m D\left( D^{2k_1+3} ((s-\bar x) \qcsa{-m-1}) \right)\nonumber\\
			&= \frac{2^{2k_1+3} (h_n-m) \gamma_m}{m!}  D    \left( (s-\bar x) \sum_{j=0}^{k_1} a^1_{j,k_1+1,m} \mathcal{Q}^{-m-j-3-k_1}_{c,s}(x) (s-x_0)^{2j+1} \right. \\
			& \left. -\sum_{j=0}^{k_1+1} b^1_{j,k_1+1,m} \mathcal{Q}_{c,s}^{-m-2-k_1-j}(x) (s-x_0)^{2j}  \right).\nonumber  
		\end{align*}
		\endgroup
		We compute separately the Dirac operator applied to the two summations. By \eqref{d4_cauchy_ker} we have that
			\begingroup\allowdisplaybreaks
		\begin{align}\label{f4}
			& D \left( (s-\bar x) \sum_{j=0}^{k_1} a^1_{j,k_1+1,m} \mathcal{Q}_{c,s}^{-m-j-3-k_1}(x) (s-x_0)^{2j+1}\right)\nonumber\\
			&  = -\sum_{j=0}^{k_1} 2 (h_n-m-k_1-2-j) a^1_{j,k_1+1,m} \mathcal{Q}_{c,s}^{-m-j-3-k_1}(x) (s-x_0)^{2j+1} \\
			& - (s-\bar x)\sum_{j=0}^{k_1}  (2j+1) a^1_{j,k_1+1,m} \mathcal{Q}_{c,s}^{-m-j-3-k_1}(x) (s-x_0)^{2j} \nonumber
		\end{align} 
		\endgroup
		and, again by \eqref{d3_cauchy_ker}, we obtain
			\begingroup\allowdisplaybreaks
		\begin{align}\label{f5}
			& D \left( \sum_{j=0}^{k_1+1} b^1_{j,k_1+1,m} \mathcal{Q}_{c,s}^{-m-2-k_1-j}(x) (s-x_0)^{2j} \right)\nonumber\\
			&  =\sum_{j=0}^{k_1+1} 4 (m+k_1+j+2) b_{j,k_1+1,m}^1 \qcsa{-m-3-k_1-j} (s-x_0)^{2j+1} \\
			&  - (s-\bar x) \sum_{j=0}^{k_1+1} 2(m+k_1+j+2) b^1_{j,k_1+1,m} \qcsa{-m-3-k_1-j} (s-x_0)^{2j}\nonumber\\
			&\left. - \sum_{j=0}^{k_1} 2(j+1) b^1_{j+1,k_1+1,m} \mathcal{Q}_{c,s}^{-m-3-k_1-j}(x) (s-x_0)^{2j+1} \right). \nonumber
		\end{align} 
		\endgroup
		where in the last summation we have replaced $j-1$ with $j$. Now we add the terms in \eqref{f4} and \eqref{f5}, and we gather them into two families: those multiplied by $(s-\bar x)$ and those that are not multiplied by $(s-\bar x)$. Thus, we obtain
					\begingroup\allowdisplaybreaks
		\begin{align}
			& \gamma_m D\left( D^{2k_1+3}  \left(\Delta^m_{n+1} S^{-1}_L(s,x)\right) \right)= \frac{2^{2k_1+3} (h_n-m)}{m!} \left( (s-\bar x)\right.\nonumber\\
			& \left. \times \sum_{j=0}^{k_1} \left(-(2j+1) a^1_{j,k_1+1,m} + 2(m+k_1+j+2) b^1_{j,k_1+1,m} \right)  \qcsa{-m-3-k_1-j} (s-x_0)^{2j}\right. \\
			& +(s-\bar x) 2 (m+2k_1+3) b^1_{k_1+1,k_1+1,m} \qcsa{-m-2k_1-4} (s-x_0)^{2k_1+2}\nonumber\\
			& - \sum_{j=0}^{k_1} \left( 2(h_n-m-k_1-2-j)a^1_{j,k_1+1,m}+4(m+k_1+j+2)b^1_{j,k_1+1,m}-2(j+1) b^1_{j+1,k_1+1,m}\right) \nonumber \\
			& \times \mathcal{Q}_{c,s}^{-m-3-k_1-j}(x) (s-x_0)^{2j+1}  \nonumber\\
			&  - 4(m+2k_1 + 3) b^1_{k_1+1,k_1+1,m} \mathcal{Q}_{c,s} ^{-m-4-2k_1} (s-x_0)^{2k_1+3}.\nonumber
		\end{align}
	\endgroup
	
		By the properties of the coefficients given in \eqref{C1}, \eqref{C2}, \eqref{C3} and \eqref{C4} we obtain 
		\begin{align}\label{f6}
			& \gamma_m D\left( D^{2k_1+3} \left(\Delta^m_{n+1} S^{-1}_L(s,x)\right) \right)=\frac{2^{2k_1+4} \gamma_m (h_n-m)}{m!} \left( (s-\bar x) \sum_{j=0}^{k_1+1} a^2_{j,k_1+2,m} \qcsa{-m-3-k_1-j}  \right. \nonumber \\ 
			&\left. \times(s-x_0)^{2j} -\sum_{j=0}^{k_1+1} b^2_{j,k_1+2,m} \mathcal{Q}_{c,s}^{-m-3-k_1-j}(x) (s-x_0)^{2j+1} \right).
		\end{align}
		Since $k_1=k_2-1$, we observe that \eqref{f6} coincide with $\eqref{de}$ when $k_2$ is replaced by $k_2+1$. Thus the inductive step for \eqref{de} is proved.
	\end{proof}

		\begin{remark}
		If we set $\beta = 1$ in \eqref{do}, we obtain
		$$
		D (\Delta_{n+1}^m S^{-1}_L(s,x))
		= 2^{2m+1} m! (-h_n)_m \, \mathcal{Q}_{c,s}^{-m-1}(x),
		$$
		which is exactly the same expression as in \eqref{new1}.
		
	\end{remark}

	\begin{remark}
		If we consider the case $ m = h_m $ in Theorem \ref{p11}, we obtain that the application of the operator $D^\beta \Delta^{h_n}_{n+1} $ leads to zero. This is consistent with the Fueter-Sce theorem, since the function $ S^{-1}_L(s,x) $ is slice-hyperholomorphic in $x$.
		
	\end{remark}

By using arguments similar to those in the proof of Theorem \ref{p11}, one can show the application of the operator \eqref{FF1} to the second form of the slice hyperholomorphic Cauchy kernel.

	\begin{theorem}
	\label{second}
		Let $n$ be a fixed odd number and $h_n:=(n-1)/2$. Let $m$, $\beta \in \mathbb{N}$ be such that $m+\beta\leq h_n$. We assume that $s, x \in \mathbb{R}^{n+1}$ with $s \notin [x]$, and we denote by $\overline{D}$ and $\Delta_{n+1}$ the conjugate Dirac and Laplace operators, respectively, applied with respect to the variable $x$. If $\beta=2k_1+1$, where $k_1\in\mathbb N$  we have
		\begin{eqnarray}
			&& \overline D^{\beta}\left( \Delta^m_{n+1} S^{-1}_L(s,x) \right)=  2^{\beta} 4^m  (-h_n)_{m}   \left( (s-\bar x) \sum_{j=0}^{k_1} \mathbf{a}^1_{j,k_1,m} \mathcal{Q}_{c,s}^{-m-j-2-k_1}(x) (s-x_0)^{2j+1} \right. \nonumber\\
						\label{k11}
			&&\left. +\sum_{j=0}^{k_1} \mathbf{b}^1_{j,k_1,m}\mathcal{Q}_{c,s}^{-m-1-k_1-j}(x) (s-x_0)^{2j}  \right),
		\end{eqnarray}
		where
		$$
		\mathbf{a}^1_{j,k_1,m}:=2^{2j+1}  (m +k_1+1+j)!  (k_1-j)! \binom{k_1+j+1}{2j+1} \binom{h_n-m-k_1-j-2}{h_n-m-2k_1-2}
		$$
		and
		$$
		\mathbf{b}^1_{j,k_1,m}:= 2^{2j}  (k_1-j+1)! (m+k_1+j)! \binom{h_n-m-k_1-j-1}{h_n-m-2k_1-2}  \binom{k_1+j}{2j}. 
		$$
		If $\beta=2k_2$, where $k_2\in\mathbb N$ we have
		\begin{eqnarray*}
			&& \overline D^{\beta} \left( \Delta^m_{n+1} S^{-1}_L(s,x)\right )= 2^{\beta} 4^m  (-h_n)_{m} \left( (s-\bar x) \sum_{j=0}^{k_2} \mathbf{a}^2_{j,k_2,m} \mathcal{Q}_{c,s}^{-m-j-1-k_2}(x) (s-x_0)^{2j}\right.\nonumber \\
			&&\left. +\sum_{j=0}^{k_2-1} \mathbf{b}^2_{j,k_2,m} \mathcal{Q}_{c,s}^{-m-1-k_2-j}(x) (s-x_0)^{2j+1}  \right),
		\end{eqnarray*}
		where 
		$$
		\mathbf{a}^2_{j,k_2,m}:=2^{2j} (m+k_2+j) ! (k_2-j)! \binom{k_2+j}{2j} \binom{h_n-m-k_2-1-j}{h_n-m-2k_2-1} 
		$$
		and 
		$$
		\mathbf{b}^2_{j,k_2,m}:=2^{2j+1} (k_2-j)! (m+k_2+j)! \binom{h_n-m-k_2-1-j}{h_n-m-2k_2-1}\binom{k_2+j}{2j+1}
		$$
	\end{theorem}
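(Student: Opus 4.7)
The plan is to mimic the proof structure of Theorem \ref{p11} essentially line by line, replacing the Dirac operator $D$ (and the identities of Lemma \ref{l_one}) with the conjugate Dirac operator $\overline{D}$ (and the identities of Lemma \ref{l_one1}), and then tracking how the recursion on the coefficients changes. As starting point I would again use \eqref{appL}, which rewrites $\Delta^m_{n+1} S^{-1}_L(s,x)=\gamma_m(s-\bar x)\mathcal{Q}_{c,s}^{-m-1}(x)$, and then apply $\overline{D}$ repeatedly. The overall scheme is a double induction on $k_1,k_2\in\mathbb{N}_0$ where one first shows that the odd case $\beta=2k_1+1$ for index $k_1$ together with the even case $\beta=2k_2=2(k_1+1)$ imply the odd case for $k_1+1$, and then that this odd case implies the even case for $k_2=k_1+2$.

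For the base cases I would check $\beta=1$ and $\beta=2$ directly from Lemma \ref{l_one1}. A quick calculation using the second and first lines of that lemma gives
\[
\overline{D}(\Delta^m_{n+1}S^{-1}_L(s,x))=\gamma_m\overline{D}\!\left((s-\bar x)\mathcal{Q}_{c,s}^{-m-1}(x)\right)=\gamma_m\bigl(2(h_n-m-1)\mathcal{Q}_{c,s}^{-m-1}(x)+4(m+1)(s-\bar x)(s-x_0)\mathcal{Q}_{c,s}^{-m-2}(x)\bigr),
\]
which should match \eqref{k11} with $k_1=0$ once the explicit values of $\mathbf{a}^1_{0,0,m}$ and $\mathbf{b}^1_{0,0,m}$ are computed; a second application produces the $k_2=1$ formula. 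Here, in contrast to the $D$ case, the term $(s-\bar x)$ does not get annihilated, it gets multiplied by $(s-x_0)$, which explains why both the $(s-\bar x)$-summation and the plain summation in the statement are always present (and why the signs are all $+$ rather than alternating).

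For the inductive step I would split $\overline{D}$ applied to the inductive expression into the two families of summands: the $(s-\bar x)$ terms, handled with the fourth line of Lemma \ref{l_one1}, and the plain $\mathcal{Q}_{c,s}^{-\ell}(x)(s-x_0)^k$ terms, handled with the third line. Each application produces three contributions (a degree-shift, a power-of-$(s-x_0)$ shift, and, in the $(s-\bar x)$ case, an extra factor $(s-x_0)$), so after reindexing $j\mapsto j\pm1$ one collects, for each fixed pair of exponents $(-m-\ell-k,2j$ or $2j+1)$, a linear combination of the old coefficients that should coincide with the new coefficient $\mathbf{a}^{1,2}_{\cdot,\cdot,m}$ or $\mathbf{b}^{1,2}_{\cdot,\cdot,m}$.

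The hard part, as in Theorem \ref{p11}, will be the combinatorial identities for the coefficients: one needs analogues of the relations (c1)--(c5) and (C1)--(C4) cited from the Appendix, but now for $\mathbf{a}^1,\mathbf{b}^1,\mathbf{a}^2,\mathbf{b}^2$ and with the new signs coming from Lemma \ref{l_one1} (in particular from $\overline{D}(s-\bar x)\mathcal{Q}_{c,s}^{-\ell}=2(h_n-\ell)\mathcal{Q}_{c,s}^{-\ell}+4\ell(s-\bar x)(s-x_0)\mathcal{Q}_{c,s}^{-\ell-1}$, whose sign is opposite to the corresponding $D$-formula). These identities reduce to standard manipulations of factorials and binomial coefficients of the form $\binom{k+j}{2j}$, $\binom{k+j}{2j+1}$ and $\binom{h_n-m-k-j-1}{h_n-m-2k-1}$, together with Pascal-type recursions; I would record them separately in the Appendix (parallel to the existing one for Theorem \ref{p11}) and invoke them at the appropriate point of the induction. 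Once those identities are in place, the algebraic bookkeeping proceeds exactly as in the proof of Theorem \ref{p11}, and the two inductive steps close, completing the proof.
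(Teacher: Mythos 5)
Your proposal is correct and takes essentially the same route as the paper: the paper itself gives no written proof of Theorem \ref{second}, stating only that it follows ``by using arguments similar to those in the proof of Theorem \ref{p11}'', and your plan — starting from \eqref{appL}, running the same double induction with Lemma \ref{l_one1} in place of Lemma \ref{l_one}, and isolating the analogous combinatorial identities for $\mathbf{a}^{1,2},\mathbf{b}^{1,2}$ in an appendix — is precisely that argument, with the base case $\beta=1$ correctly verified against $\mathbf{a}^1_{0,0,m}=2(m+1)!$ and $\mathbf{b}^1_{0,0,m}=m!(h_n-m-1)$.
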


\begin{remark}
If $\beta = h_n - m$, we recover the kernel in \eqref{polyapp}. We begin by examining the case where $\beta=h_n-m=2k_1+1$. In this case, when $0\leq j<k_1$, the coefficients satisfy the following equations

		$$
	\mathbf{a}^1_{j,k_1,m}:=2^{2j+1}  (m +k_1+1+j)!  \binom{k_1+j+1}{2j+1} \prod_{\ell=0}^{k_1 - j - 1} (h_n - m - 2k_1 - 1 + \ell)=0
	$$
	and, for $0\leq j\leq k_1$, the coefficients satisfy the following equations 
	$$
	\mathbf{b}^1_{j,k_1,m}:= 2^{2j}   (m+k_1+j)!  \binom{k_1+j}{2j} \prod_{\ell=0}^{k_1-j} (h_n -m-2k_1-1+\ell)=0
	$$
	due to the facts
	$
	\frac{(h_n - m - k_1 - j - 2)!}{(h_n - m - 2k_1 - 2)!} = \prod_{\ell=0}^{k_1 - j - 1} (h_n - m - 2k_1 - 1 + \ell),
	$
	and
	$ \frac{(h_n-m-k_1-j-1)!}{(h_n-m-2k_1-2)!}=\prod_{\ell=0}^{k_1-j} (h_n -m-2k_1-1+\ell)$. Since $a^1_{k_1, k_1, m} = 2^{h_n - m} h_n!$, it follows from \eqref{k11} that
	
	\begin{eqnarray*}
&& \overline D^{\beta}\left( \Delta^m_{n+1} S^{-1}_L(s,x) \right)=  2^{h_n-m} 4^m  (-h_n)_{m}   \left( a_{k_1, k_1,m}(s-\bar x) \mathcal{Q}^{-m-2k_1-2}_{c,s}(x) \right. \\
\nonumber
&&\left. + \sum_{j=1}^{k_1-1} \mathbf{a}^1_{j,k_1,m} \mathcal{Q}_{c,s}^{-m-j-2-k_1}(x) (s-x_0)^{2j+1} +\sum_{j=0}^{k_1} \mathbf{b}^1_{j,k_1,m}\mathcal{Q}_{c,s}^{-m-1-k_1-j}(x) (s-x_0)^{2j}  \right)\\
&&=2^{2h_n} h_n! (-h_n)_m (s - \bar{x}) (s - x_0)^{h_n - m} \mathcal{Q}_{c,s}^{-1 - h_n}(x)\\
&&=\frac{(-1)^{h_n-\ell}}{(h_n-\ell)!} F_L^n(s,x)(s-x_0)^{h_n-\ell}.
	\end{eqnarray*}
	
The case where $\beta=h_n-m=2k_2$ is even follows by similar arguments. In this case, the coefficients, for $0\leq j<k_2$, satisfy the following equations
$$
\mathbf{a}^2_{j,k_2,m} := 2^{2j} (m+k_2+j)! \binom{k_2+j}{2j} \prod_{\ell=0}^{k_2-j-1} (h_n-m-2k_2+\ell)=0,
$$
and
$$
\mathbf{b}^2_{j,k_2,m} := 2^{2j+1} (m+k_2+j)! \binom{k_2+j}{2j+1} \prod_{\ell=0}^{k_2 - j - 1} (h_n - m - 2k_2 + \ell)=0,
$$
since $\frac{(h_n-m-k_2-1-j)!}{(h_n-m-2k_2-1)!}=\prod_{\ell=0}^{k_2-j-1} (h_n-m-2k_2+\ell)$, $\frac{(h_n - m - k_2 - j - 1)!}{(h_n - m - 2k_2 - 1)!} = \prod_{\ell=0}^{k_2 - j - 1} (h_n - m - 2k_2  + \ell)$ and $\mathbf{a}^2_{k_2,\, k_2,\, m}=2^{h_n - m} h_n!$.

\end{remark}

\section{Concluding remarks}

In a future paper, we aim to determine which function spaces can be obtained by applying the operators $D^\beta \Delta_{n+1}^m$ and $\overline{D}^\beta \Delta_{n+1}^m$ to the set of left slice hyperholomorphic functions. In other words, we seek to identify the function spaces $X(U)$ and $Y(U)$ such that
$$
D^\beta \Delta_{n+1}^m\bigl(\mathcal{SH}_L(U)\bigr) = X(U), 
\qquad 
\overline{D}^\beta \Delta_{n+1}^m\bigl(\mathcal{SH}_L(U)\bigr) = Y(U).
$$
By combining Theorems \ref{p11}, \ref{second}, and \ref{Cauinte}, we can obtain integral representations for functions in the spaces $X(U)$ and $Y(U)$. Based on these representations and the theory of the $S$-spectrum (see \cite{ColomboSabadiniStruppa2011}), we aim to establish a functional calculus for bounded operators whose components commute. These functional calculi will be more general than the polyharmonic and polyanalytic functional calculi studied in \cite{CDP25}.

\appendix
\section{}
In this Appendix, we present the main properties of the coefficients $a^1_{j,k_1,m}$, $b^1_{j,k_1,m}$, $a^2_{j,k_1,m}$, and $b^2_{j,k_1,m}$, which have been employed in the proof of Theorem \ref{p11}. The following Stifel identity will be crucial:
\begin{equation}
\label{SD}
\binom{n}{k}= \binom{n-1}{k}+\binom{n-1}{k-1}.
\end{equation}
\begin{proposition}
Let $k_2$, $m \in \mathbb{N}$. For $n$ being an odd number we set $h_n=\frac{n-1}{2}$. Then we have
\begin{enumerate}
\item 
\begin{equation}
\label{c1}
2(m+2k_2)b_{k_2-1,k_2,m}^2=2a^1_{k_2-1,k_2,m}
\end{equation}
\item For $0 \leq j \leq k_2-2$ 
\begin{equation}
\label{c2}
\left((-2j-2) a^2_{j+1,k_2,m} +2(m+k_2+j+1) b^2_{j,k_2,m} \right)=2a^1_{j,k_2,m},  
\end{equation}
\item 
\begin{equation}
\label{c3}
	2a^2_{0,k_2,m}(h_n-m-k_2) -b^2_{0,k_2,m}=2 b^1_{0,k_2,m}
\end{equation}
\item 
\begin{equation}
\label{c4}
	2 a^2_{j,k_2,m} (h_n-m-j-k_2) + 4(m+k_2+j) b^2_{j-1,k_2,m}-(2j+1) b^2_{j,k_2,m}=2b^1_{j,k_2,m}
\end{equation}
\item 
\begin{equation}
\label{c5}
		4(m+2k_2) b^2_{k_2-1,k_2,m} =2b^1_{k_2,k_2,m}.
\end{equation}
\end{enumerate}
\end{proposition}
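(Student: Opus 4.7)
The plan is to verify the five identities by direct substitution of the explicit formulas defining $a^1_{j,k_1,m}$, $b^1_{j,k_1,m}$, $a^2_{j,k_1,m}$, $b^2_{j,k_1,m}$, and then to reduce each resulting equation to a binomial identity handled by Stifel's relation \eqref{SD}. All four families share a common ``skeleton'' $2^{2j+\varepsilon}(m+k_2+j)!\,(k_2-j-1)!\,B_{h_n}\,B_j$, where $B_{h_n}$ is a binomial involving $h_n$, $m$, $k_2$, $j$ and $B_j$ is a binomial depending only on $k_2,j$. Pulling out the common factor $2^{2j+1}$, the common factorials, and the common $h_n$-binomial at the start of each identity reduces the proof to comparing a small rational combination of binomials in $k_2$ and $j$.

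First I would treat the two boundary identities \eqref{c1} and \eqref{c5}. Setting $j=k_2-1$ in $b^2$ and $a^1$, respectively $j=k_2$ in $b^1$, many of the factorials and binomials collapse to $1$, so both identities reduce to arithmetic equalities of the form $2^{2k_2}(m+2k_2)!=2^{2k_2}(m+2k_2)!$. Identity \eqref{c3} is the $j=0$ endpoint of \eqref{c4}, and once the latter is established it can be obtained by specialization (or verified directly in the same way as \eqref{c1}).

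The generic identity \eqref{c2} (for $0\le j\le k_2-2$) is the first real computation. After substitution, both $a^2_{j+1,k_2,m}$ and $b^2_{j,k_2,m}$ share the prefactor $2^{2j+1}(m+k_2+j+1)!\,(k_2-j-2)!\,\binom{h_n-m-k_2-j-2}{h_n-m-2k_2}$, and the common prefactor on the right is the same up to a shift in one binomial coefficient coming from $\binom{h_n-m-k_2-j-2}{h_n-m-2k_2-1}$ occurring in $a^1$. After factoring, the claim reduces to a linear combination of the two binomials $\binom{k_2+j}{2j+2}$ and $\binom{k_2+j}{2j+1}$ that collapses to $\binom{k_2+j}{2j+1}$ via Stifel \eqref{SD}, together with a single application of the identity $\binom{h_n-m-k_2-j-2}{h_n-m-2k_2}=\binom{h_n-m-k_2-j-2}{h_n-m-2k_2-1}\cdot\frac{h_n-m-2k_2-1}{k_2-j-1}$ to match the $h_n$-binomials.

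The main obstacle is identity \eqref{c4}, the generic $b$-case, since it contains three terms on the left and mixes the coefficient $b^2_{j-1,k_2,m}$ (whose index shift changes both a factorial and a binomial) with $a^2_{j,k_2,m}$ and $b^2_{j,k_2,m}$. My plan here is to first unify the $h_n$-binomials: the three terms feature $\binom{h_n-m-k_2-1-j}{h_n-m-2k_2}$ and $\binom{h_n-m-k_2-j}{h_n-m-2k_2}$, which are linked by Stifel, and the linear factor $(h_n-m-j-k_2)$ in front of $a^2_{j,k_2,m}$ is exactly what is needed to convert the first binomial into the second (and to turn $(k_2-j-1)!$ into $(k_2-j)!$). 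Once the $h_n$-dependence is aligned, the remaining identity is a $k_2,j$-only identity among $\binom{k_2+j-1}{2j}$, $\binom{k_2+j}{2j+1}$, and $\binom{k_2+j}{2j}$, which collapses by one more application of \eqref{SD} and the elementary relation $(2j+1)\binom{k_2+j}{2j+1}=(k_2-j)\binom{k_2+j}{2j}$. All the bookkeeping is routine but tedious; the cleanest way is to treat the ratio $a^2_{j,k_2,m}/b^1_{j,k_2,m}$, $b^2_{j,k_2,m}/b^1_{j,k_2,m}$, $b^2_{j-1,k_2,m}/b^1_{j,k_2,m}$ in closed form and then verify a scalar identity among these three ratios.
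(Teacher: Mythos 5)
Your proposal is correct in substance and follows essentially the same route as the paper's proof: direct substitution of the explicit coefficient formulas, extraction of the common factorial and power-of-two prefactors, and reduction to binomial identities via Stifel's relation \eqref{SD} together with absorption identities such as $(2j+2)\binom{k_2+j}{2j+2}=(k_2-j-1)\binom{k_2+j}{2j+1}$ and $(h_n-m-k_2-j)\binom{h_n-m-k_2-1-j}{h_n-m-2k_2}=(k_2-j)\binom{h_n-m-k_2-j}{h_n-m-2k_2}$. Two details in your sketch of \eqref{c2} are misstated and should be fixed when writing it out: $b^2_{j,k_2,m}$ carries $\binom{h_n-m-k_2-1-j}{h_n-m-2k_2}$, not the binomial you list as the shared prefactor, and the mismatch between the two $h_n$-binomials is resolved by one application of Stifel (their difference is $\binom{h_n-m-k_2-j-2}{h_n-m-2k_2-1}$, which is exactly the binomial appearing in $a^1_{j,k_2,m}$), rather than by the ratio identity you quote, whose correct form is $\binom{N}{r}=\binom{N}{r-1}\tfrac{N-r+1}{r}$.
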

\begin{proof}
Formula \eqref{c1} follows directly from the definitions of the coefficients $a^1_{k_2-1,k_2,m}$ and $b^2_{k_2-1,k_2,m}$.
\\Now, we prove formula \eqref{c2}. By the relation:
$$(2j+2) \binom{k_2+j}{2j+1}=(k_2-j-1)\binom{k_2+j}{2j+1}$$
and using the Stifel identity (see \eqref{SD}), we have
	\[
\begin{split}
	&\left((-2j-2) a^2_{j+1,k_2,m} +2(m+k_2+j+1) b^2_{j,k_2,m} \right)=2^{2j+2} (m+k_2+j+1)! (k_2-j-2)! \\
	& \times \left[ -(2j+2) \binom{\kd+j}{2j+2}  \binom{h_n-m-\kd-2-j}{h_n-m-2 k_2} + (k_2-j-1) \binom{h_n-m-\kd-1-j}{h_n-m-2\kd}\binom{\kd+j}{2j+1} \right]\\
	& =2^{2j+2} (k_2-j-1) ! (m+k_2+j+1)! \binom{k_2+j}{2j+1}\binom{h_n-m-k_2-j-2}{h_n-m-2k_2-1}\\
	&=2a^1_{j,k_2,m}.
\end{split}
\]
Formula \eqref{c3} follows by using the definition of the coefficients $a^2_{0,k_2,m}$ and $b^2_{0,k_2,m}$:
\[
\begin{split}
	2a^2_{0,k_2,m}(h_n-m-k_2) -b^2_{0,k_2,m} & = 2 (k_2-1)! (m+k_2)! \binom{h_n-m-k_2-1}{h_n-m-2k_2} (h_n-m-k_2) \\
	& - 2 k_2! (m+k_2)! \binom{h_n-m-k_2-1}{h_n-m-2k_2} =2 b^1_{0,k_2,m} 
\end{split}
\]
To show formula \eqref{c4} we use the relation 
$$ (h_n-m-k_2-j)\binom{h_n-m-k_2-1-j}{h_n-m-2k_2}=(k_2-j)\binom{h_n-m-k_2-j}{h_n-m-2k_2} $$
and two times the Stifel identity (see \eqref{SD}) for $\binom{k+j-1}{2j}+\binom{k+j-1}{2j-1}$ and for $\binom{h_n-m-k_2-j}{h_n-m-2k_2}-\binom{h_n-m-k_2-1-j}{h_n-m-2k_2}$:
\[
\begin{split}
	& 2 a^2_{j,k_2,m} (h_n-m-j-k_2) + 4(m+k_2+j) b^2_{j-1,k_2,m}-(2j+1) b^2_{j,k_2,m}\\
	& = 2^{2j+1} (k_2-j-1)! (m+k_2+j)!\\
	&\times \left[ \binom{\kd+j-1}{2j} \binom{h_n-m-\kd-1-j}{h_n-m-2\kd} (h_n-m-j-\kd) \right. \\
	& \left. +(k_2-j) \binom{h_n-m-\kd-j}{h_n-m-2\kd}\binom{\kd+j-1}{2j-1} -(2j+1)\binom{h_n-m-\kd-1-j}{h_n-m-2\kd}\binom{\kd+j}{2j +1} \right] \\
	&= 2^{2j+1} (k_2-j)! (m+k_2+j)!\binom{k_2+j}{2j} \left[\binom{h_n-m-j-k_2}{h_n-m-2k_2}-\binom{h_n-m-\kd-1-j}{h_n-m-2\kd} \right]\\
	& = 2^{2j+1} (m+k_2+j)! (k_2-j)! \binom{\kd+j}{2j} \binom{h_n-m-k_2-j-1}{h_n-m-2k_2-1}=2b^1_{j,k_2.m}
\end{split}
\]
Finally formula \eqref{c5} follows by the definition of $b^2_{k_2-1,k_2,m}$  and $2b^1_{k_2,k_2,m}$.
\end{proof}

\begin{proposition}
Let $k_1 \in \mathbb{N}_0$ and $m \in \mathbb{N}$. For $n$ being an odd number we set $h_n=\frac{n-1}{2}$. Then we have
\begin{enumerate}
\item For $0\leq j\leq k_1$
\begin{equation}
\label{C1}
-(2j+1) a^1_{j,k_1+1,m} + 2(m+k_1+j+2) b^1_{j,k_1+1,m}=2 a^2_{j,k_1+2,m}
\end{equation}
\item 
\begin{equation}
\label{C2}
2(m+2k_1+3) b^1_{k_1+1,k_1+1,m}=2a^2_{k_1+1,k_1+2,m}.
\end{equation}
\item For $0\leq j\leq k_1 $
\begin{equation}
\label{C3}
2(h_n-m-k_1-2-j)a^1_{j,k_1+1,m}+4(m+k_1+j+2)b^1_{j,k_1+1,m}-2(j+1) b^1_{j+1,k_1+1,m}=2b^{2}_{j,k_1+2, m}
\end{equation}
\item 
\begin{equation}
\label{C4}
4(m+2k_1 + 3) b^1_{k_1+1,k_1+1,m}=2 b^2_{k_1+1,k_1+2,m}
\end{equation}
\end{enumerate}
\end{proposition}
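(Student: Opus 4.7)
The statement is the exact analogue for the ``odd'' case $\beta=2k_1+1$ of the previous proposition, and the strategy is entirely parallel. For each of the four identities \eqref{C1}--\eqref{C4}, the plan is to substitute the explicit expressions for $a^1_{j,k_1+1,m}$, $b^1_{j,k_1+1,m}$, $a^2_{j,k_1+2,m}$ and $b^2_{j,k_1+2,m}$, factor out the common prefactor (which will be a power of $2$ times a product of factorials), and reduce the equality to a purely combinatorial identity among binomial coefficients. At that point, Stifel's identity \eqref{SD} together with the standard absorption relation $(r-k)\binom{r}{k} = r\binom{r-1}{k}$ and the observation $(2j+1)\binom{k_1+1+j}{2j+1} = (k_1+1-j)\binom{k_1+1+j}{2j}$ will close each computation.

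I would treat the two easy boundary identities first. For \eqref{C2}, the coefficient $b^1_{k_1+1,k_1+1,m}$ simplifies dramatically because $(k_1-j+1)! = 1$ and $\binom{k_1+1+j}{2j}$ reduces to $\binom{2k_1+2}{2k_1+2}=1$ at $j=k_1+1$; a direct side-by-side comparison with $a^2_{k_1+1,k_1+2,m}$ shows equality. Identity \eqref{C4} is handled similarly: at $j=k_1+1$ in $b^2_{k_1+1,k_1+2,m}$ the factor $(k_2-j)! = 0!$ trivialises, and the equation collapses to a check on numerical prefactors.

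For the two substantive identities \eqref{C1} and \eqref{C3}, my plan is as follows. In \eqref{C1}, after pulling out $2^{2j+1}(m+k_1+1+j)!(k_1-j)!$, I would use $(2j+1)\binom{k_1+j+1}{2j+1}=(k_1-j+1)\binom{k_1+j+1}{2j}$ to align the binomials of the two summands on the left-hand side; then Stifel's identity applied to $\binom{h_n-m-k_1-j-2}{h_n-m-2k_1-3} + \binom{h_n-m-k_1-j-3}{h_n-m-2k_1-3}$ (or the corresponding pair) should collapse the combination into $\binom{h_n-m-k_1-j-2}{h_n-m-2k_1-4}$, matching $2\,a^2_{j,k_1+2,m}$. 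Identity \eqref{C3} is the main obstacle, since it has three terms, with $a^1$, $b^1$ evaluated at index $j$, and $b^1$ evaluated at $j+1$. My plan is to first combine the two $b^1$ terms using Stifel on $\binom{k_1+j+1}{2j}+\binom{k_1+j+1}{2j+1}=\binom{k_1+j+2}{2j+1}$, rewrite the factor $(h_n-m-k_1-2-j)$ via absorption against $\binom{h_n-m-k_1-j-3}{h_n-m-2k_1-3}$, and then apply Stifel a second time on the upper index of the resulting binomial to match $b^2_{j,k_1+2,m}$.

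The main obstacle will be not the mathematics but the bookkeeping: keeping track of the shifts $k_1 \to k_1+1$ (in $a^1, b^1$) versus $k_2 \to k_1+2$ (in $a^2, b^2$), and of the two different upper arguments of the binomial coefficients, which differ by one between the $a$-type and $b$-type coefficients. A uniform strategy of first factoring out $2^{2j+1}(m+k_1+1+j)!(k_1-j)!$ for the $a^1$/$a^2$ pair, and $2^{2j+1}(m+k_1+j+1)!(k_1-j+1)!$ for the $b^1$/$b^2$ pair, should keep the resulting binomial identities tractable and verifiable by at most two applications of \eqref{SD} each.
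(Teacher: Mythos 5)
Your proposal follows essentially the same route as the paper's proof: substitute the explicit coefficient formulas, factor out the common power of two times factorials, and reduce each identity to a binomial identity settled by absorption relations such as $(2j+1)\binom{k_1+j+1}{2j+1}=(k_1+1-j)\binom{k_1+j+1}{2j}$ together with the Stifel identity, with \eqref{C2} and \eqref{C4} checked directly from the definitions. Only minor bookkeeping differs (the common factorial is $(m+k_1+j+2)!$, not $(m+k_1+j+1)!$, and in \eqref{C3} the paper first merges the $a^1_j$ and $b^1_j$ terms via Stifel rather than the two $b^1$ terms, whose binomials $\binom{k_1+j+1}{2j}$ and $\binom{k_1+j+2}{2j+2}$ do not pair directly), but the method is identical.
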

\begin{proof}
We start proving formula \eqref{C1}. By using the Stifel indentity (see \eqref{SD}) and the equality 
$$ (2j+1)\binom{k_1+1+j}{2j+1}=(k_1+1-j)\binom{k_1+1+j}{2j} $$
we have
\begin{align*}
	&-(2j+1) a^1_{j,k_1+1,m} + 2(m+k_1+j+2) b^1_{j,k_1+1,m}= 2^{2j+1} (m+k_1+2+j)!(k_1-j)! \\
	&\times \left[ - (2j+1) \binom{k_1+1+j}{2j+1} \binom{h_n-m-k_1-j-3}{h_n-m-2k_1-3} + \binom{h_n-m-k_1-j-2}{h_n-m-2k_1-3} \nonumber \right. \nonumber \\ 
	& \left.  \times \binom{k_1+1+j}{2j} (k_1+1-j) \right] \nonumber \\
	& = 2^{2j+1}(m+k_1+2+j)! (k_1-j+1)!\binom{k_1+j+1}{2j}\binom{h_n-m-j-3-k_1}{h_n-m-2k_1-4}\nonumber\\
	&=2 a^2_{j,k_1+2,m}
\end{align*} 
Formula \eqref{C2} follows by using the definition of $b_{k_1+1,k_1+1,m}^1$ amd $a_{k_1+1, k_1+2,m}^2$. 
\\ To show formula \eqref{C3} we use the equalities 
$$ (h_n-m-k_1-2-j)\binom{h_n-m-k_1-3-j}{h_n-m-2k_1-3}=(k-j+1)\binom{h_n-m-k_1-j-2}{h_n-m-2k_1-3}$$ 
and
$$ (2j+2)\binom{k_1+j+2}{2j+2} = (k_1-j+1)\binom{k_1+j+2}{2j+1} $$
and two times Stifel identity (see \eqref{SD}):
\begin{align*}
	& 2(h_n-m-k_1-2-j)a^1_{j,k_1+1,m}+4(m+k_1+j+2)b^1_{j,k_1+1,m}-2(j+1) b^1_{j+1,k_1+1,m} \nonumber\\
	&= 2^{2j+2}(m+k_1+2+j)!  (k_1-j)!  \left[ (h_n-m-k_1-2-j)\binom{k_1+1+j}{2j+1}  \binom{h_n-m-k_1-3-j}{h_n-m-2k_1-3} \right. \nonumber \\
	&  \left. +(k-j+1) \binom{h_n-m-k_1-j-2}{h_n-m-2k_1-3}  \binom{k_1+1+j}{2j} -(2j+2)  \binom{h_n-m-k_1-j-3}{h_n-m-2k_1-3}  \binom{k_1+j+2}{2j +2} \right]     \nonumber \\ 
	&= 2^{2j+2} (m+k_1+2+j)! (k-j+1)! \binom{k_1+2+j}{2j+1} \left[\binom{h_n-m-k_1-2-j}{h_n-m-2k_1-3}- \binom{h_n-m-k_1-j-3}{h_n-m-2k_1-3}\right]\\
	&  = 2^{2j+2} (m+k_1+2+j)! (k_1-j+1)! \binom{k_1+2+j}{2j+1}\binom{h_n-m-k_1-3}{h_n-m-2k_1-4} = 2b^{2}_{j,k_1+2, m} 
\end{align*}
Finally, formula \eqref{C4} follows by the definitions of the $b^{2}_{k_2-1,k_2,m}$ and $b^{1}_{k_2,k_2,m}$.
\end{proof}

	\section*{Declarations and statements}
	
	{\bf Data availability}. The research in this paper does not imply use of data.
	
	{\bf Conflict of interest}. The authors declare that there is no conflict of interest.
	
	{\bf Acknowledgments}.
 A. De Martino and S. Pinton are supported by MUR grant Dipartimento di Eccellenza 2023-2027.

\end{document}